\documentclass[11pt,a4paper]{article}
\usepackage[a4paper]{geometry}
\usepackage{amssymb,latexsym,amsmath,amsfonts,amsthm}
\usepackage{graphicx}

\usepackage{epsfig}
\usepackage{comment}
\usepackage{overpic}
\usepackage{mathrsfs,enumerate}
\usepackage{amsmath}
\usepackage{amsthm}
\usepackage{amssymb}
\usepackage{amsfonts}
\usepackage{epsf}
\usepackage{graphicx}
\usepackage{hyperref}
\usepackage{indentfirst}
\usepackage{cases}

\newcommand{\supp}{\mathrm{supp}}

\hyphenation{pa-ra-me-tri-za-tion}

\newtheorem{theorem}{Theorem}[section]
\newtheorem{lem}[theorem]{Lemma}
\newtheorem{prop}[theorem]{Proposition}

\theoremstyle{definition}

\newcommand{\eq}{\begin{equation}}
\newcommand{\nq}{\end{equation}}
\newcommand{\eqa}{\begin{eqnarray}}
\newcommand{\nqa}{\end{eqnarray}}
\numberwithin{equation}{section}

\newtheorem{remark}{\indent \textrm{Remark}}

    

\vfuzz2pt 
\hfuzz2pt 

\begin{document}
\title{Asymptotic zero distribution of multiple
orthogonal polynomials associated with Macdonald functions}
\date{}
\author{Lun Zhang\footnotemark[1] , Pablo Rom\'an\footnotemark[2]  }
\maketitle {
\renewcommand{\thefootnote}{\fnsymbol{footnote}}
\footnotetext[1]{Department of Mathematics, Katholieke Universiteit
Leuven, Celestijnenlaan 200B, B-3001 Leuven, Belgium. E-mail:
lun.zhang@wis.kuleuven.be.}

\footnotetext[2]{Department of Mathematics, Katholieke Universiteit
Leuven, Celestijnenlaan 200B, B-3001 Leuven, Belgium. E-mail:
pablo.roman@wis.kuleuven.be.}

 }

\begin{abstract}
We study the asymptotic zero distribution of type II multiple
orthogonal polynomials associated with two Macdonald functions
(modified Bessel functions of the second kind). Based on the
four-term recurrence relation, it is shown that, after proper
scaling, the sequence of normalized zero counting measures converges
weakly to the first component of a vector of two measures which
satisfies a vector equilibrium problem with two external fields. We
also give the explicit formula for the equilibrium vector in terms
of solutions of an algebraic equation.
\end{abstract}

\section{Introduction }

Given a positive measure $\mu$ on the real line for which the support
is not finite and all moments exist, there exists a sequence of
monic orthogonal polynomials $\{p_k\}$ of degree $k$. Such polynomials
satisfy a three-term recurrence relation of the form
\begin{equation}\label{eq:recurrence_scalar_op}
xp_k(x)=p_{k+1}(x)+b_kp_k(x)+a_k^2p_{k-1}(x),\quad k\geq 0,
\end{equation}
with $a_k\geq 0$, $b_k\in \mathbb{R}$ and $p_0\equiv1$, $p_{-1}\equiv 0$.

It is well-known that the zeros of $p_k$ are real and simple. We can
associate with $p_k(x)$ the normalized zero counting measure
\begin{equation}\label{def: counting measure}
\nu(p_k)=\frac1k \sum_{j=1}^k \delta_{x_{j,k}},
\end{equation} where
$x_{j,k}$, $j=1,\ldots,k$, are the zeros of $p_k$ and $\delta_{x}$
denotes the Dirac point mass at $x$. A measure $\nu$ is called the
asymptotic zero distribution of $\{p_k\}$ if
$$\lim_{k\to \infty} \int f \, d\nu(p_k)=\int f \, d\nu$$
for every bounded continuous function $f$ on $\mathbb{R}$, i.e., it is the
weak limit of the measures $\nu(p_k)$.

Suppose $\lim_{k\to\infty}a_k=a$ and $\lim_{k\to\infty}b_k=b$ with $a>0$ and $b\in \mathbb{R}$, then the polynomials
generated by \eqref{eq:recurrence_scalar_op} have the asymptotic zero distribution $w_{[\alpha,\beta]}$ with density
\begin{equation}\label{w al be}
 \frac{dw_{[\alpha,\beta]}(x)}{dx}=\begin{cases} \frac{1}{\pi\sqrt{(\beta-x)(x-\alpha)}}, &x\in [\alpha,\beta],
 \\ 0, & \text{elsewhere}, \end{cases}
\end{equation}
where $\alpha=b-2a$, $\beta=b+2a$; cf. \cite{Nev}.

This result has been extended in \cite{KVA} to the case of orthogonal polynomials generated by the recurrence relation
$$xp_{k,n}(x)=p_{k+1,n}(x)+b_{k,n}p_{k,n}(x)+a_{k,n}^2p_{k-1,n}(x),\quad k,n\in\mathbb{N},$$
with varying recurrence coefficients $a_{k,n}>0$ and $b_{k,n}\in \mathbb{R}$
depending on a parameter $n$. Assume that the recurrence coefficients
have continuous limits
$$\lim_{k/n\to s}a_{k,n}=a(s), \quad \lim_{k/n\to s}b_{k,n}=b(s),$$
where $a:(0,\infty)\to [0,\infty)$, $b:(0,\infty)\to \mathbb{R}$ and the notation $\lim_{k/n \to s}$ means that both $k,n\to \infty$ with $k/n\to s>0$,
it is proved that the
asymptotic zero distribution is given by the average
\begin{equation}
\label{eq:zero_dist_OP}
\lim_{k/n\to s} \nu(p_{k,n})=\frac1s \int_0^s w_{[\alpha(s),\beta(s)]}ds,
\end{equation}
where $\alpha(s):=b(s)-2a(s)$, $\beta(s):=b(s)+2a(s)$ and $w_{[\alpha,\beta]}$
is defined by \eqref{w al be} if $\alpha<\beta$ and by $\delta_\alpha$ if $\alpha=\beta$;  see Theorem 1.10 of \cite{KVA}.

A natural generalization of this case consists in considering, for
each $n\in\mathbb{N}$, polynomials $P_{k,n}$ satisfying an $m$-term
recurrence relation with varying coefficients:
\begin{equation} \label{recurrence_m}
    xP_{k,n}(x) = P_{k+1,n}(x)+b_{k,n}^{(0)} P_{k,n}(x) +
    b_{k,n}^{(1)}P_{k-1,n}(x)+
        \cdots+b_{k,n}^{(m-2)}P_{k+2-m,n}(x),
\end{equation}
where $P_0\equiv 1$, $P_{-1}\equiv 0,\ldots,P_{-m+2}\equiv 0$ and
the recurrence coefficients have scaling limits
\[ \lim_{k/n\to s} b_{k,n}^{(j)} = b^{(j)}(s), \qquad j=0, \ldots, m-2, \]
for certain functions $b^{(0)}, \ldots, b^{(m-2)}$.

For the simplest case $b_{k,n}^{(j)} = b^{(j)}(s)$, i.e., we remove the dependence on
the parameter $n$ and the recurrence coefficients in \eqref{recurrence_m} are actually constant, the
zeros of $P_k=P_{k,n}$ are closely related to the spectrum of certain banded
Toeplitz matrix. Indeed, if we associate with the functions
$b^{(j)}$ a family of functions
\begin{equation} \label{sympbolsAs}
A_s(z) = z + b^{(0)}(s) + b^{(1)}(s)z^{-1}+ \cdots +b^{(m-2)}(s) z^{-m+2},
\end{equation}
and the sequence of $k \times k$ Toeplitz matrices $(T_k(A_s))_k$
with symbol $A_s$, defined by
\begin{equation}
\label{definition_toeplitz_matrix}
    (T_k(A_s))_{jl}=
    \begin{cases} 1,          & \text{if }l=j+1,\\
    b^{(i)}(s), & \text{if }l=j-i, \qquad i=0,
    \ldots, m-2,\\
    0,          & \text{otherwise},
    \end{cases}
\end{equation}
it is readily seen that $P_k(\lambda)=0$ if and only if $\lambda$ is
an eigenvalue of $T_k(A_s)$. Hence, the investigation of limiting zero
distribution of $P_k$ is equivalent to the study of the limiting
behavior of the spectrum of $T_k(A_s)$ as $k \to \infty$.

The limiting behavior of the spectrum of $T_k(A_s)$ as $k\to \infty$
is characterized by the solutions of the algebraic equation
$A_s(z)=x$; see \cite{BG}. For every $x\in\mathbb{C}$, there exist
exactly $m-1$ solutions of the equation $A_s(z)=x$ (assume that
$b^{(m-2)}(s) \neq 0$), which we denote by $z_j(x,s)$,
$j=1,\ldots,m-1$ and label these solutions by their absolute value
so that
\begin{equation}
\label{absolute_value_solutions}
    |z_{1}(x,s)| \geq |z_2(x,s)| \geq \cdots \geq |z_{m-1}(x,s)| > 0.
\end{equation}
We put
\begin{equation} \label{Gamma1s}
    \Gamma_1(s) = \{ x\in \mathbb{C} \mid |z_1(x,s)|=|z_2(x,s)| \},
    \end{equation}
which is a finite union of analytic arcs.

It was shown by Schmidt and Spitzer \cite{SS} that the eigenvalues of
$T_k(A_s)$ accumulate on the contour $\Gamma_1(s)$ as $k$ tends to
$\infty$. Moreover, Hirschman \cite{Hirschman} proved that the
sequence of normalized counting measures of the eigenvalues of
$T_k(A_s)$ converges weakly to a Borel probability measure $\mu^s_1$
supported on $\Gamma_1(s)$ as $k\to \infty$; see also
\cite[Chapter~11]{BG}. The precise form of $\mu^s_1$ is given by
\begin{equation} \label{measure_mu_for_recurrence}
    d\mu^s_1(x) = \frac{1}{2\pi i}
    \left( \frac{z'_{1-}(x,s)}{z_{1-}(x,s)}-\frac{z'_{1+}(x,s)}{z_{1+}(x,s)}
     \right)dx,
\end{equation}
which is due to the result in \cite{DK1}. Here, we have that $'$
denotes the derivative with respect to $x$, $dx$ is the complex line
element on $\Gamma_1(s)$ and $z_{1\pm}(x,s)$ is the limiting value
of $z_1(\tilde{x},s)$ as $\tilde{x}\to x$ from the $\pm$ side of
$\Gamma_1(s)$. Moreover, the measure $\mu_1^s$ is also characterized
by an equilibrium problem; see Theorem \ref{equilibrium_problem_Toeplitz}
below for a statement in the context of the specific example considered
in this paper.

Under certain conditions, the
polynomials $P_{k,n}$ satisfying the recurrence
\eqref{recurrence_m} have a limiting zero
distribution as well, which is an average, with respect to
the parameter $s$, of the measures
\eqref{measure_mu_for_recurrence}.
More precisely, we have (see Theorem 1.2 in
\cite{KR}):

\begin{theorem} \label{Theorem_zero_distribution_general}
Let for each $n\in \mathbb{N}$, $m-1$ sequences
$\{b_{k,n}^{(j)}\}_{k=0}^\infty$, $j=0,\ldots,m-2$, of real
coefficients be given and assume that there exist continuous
functions $b^{(j)}:[0,\infty)\to \mathbb{R}$, $j=0,\ldots,m-2$, such
that for each $s\geq 0$,
\begin{equation}
\label{Convergence_coefficients_m} \lim_{k/n\to s}
b^{(j)}_{k,n}=b^{(j)}(s),\quad j=0,\ldots,m-2.
\end{equation}
Let $P_{k,n}$ be the monic polynomials generated by the recurrence
\eqref{recurrence_m} and suppose that
\begin{itemize}
\item[\rm (a)]  The polynomials $P_{k,n}$ have real and simple zeros
$x_1^{k,n} < \cdots < x_k^{k,n}$
satisfying for each $k$ and $n$ the interlacing property
\[ x_j^{k+1,n}< x_j^{k,n}<x_{j+1}^{k+1,n}, \qquad \text{for } j=1,\ldots,k, \]
\item[\rm (b)] $\Gamma_1(s) \subset \mathbb R$ for every $s > 0$,
where $\Gamma_1(s)$
is given by \eqref{Gamma1s}.
\end{itemize}
Then the normalized zero counting measures $\nu(P_{k,n}) =
\frac{1}{k} \sum_{j=1}^k \delta_{x_j^{k,n}}$ have a weak limit as
$k, n \to \infty$ with $k/n \to \xi > 0$ given by
\begin{equation} \label{limitingmeasure}
    \lim_{k/n\to \xi} \nu(P_{k,n})= \frac{1}{\xi}\int_0^\xi \mu^{s}_1 \, ds,
    \end{equation}
where $\mu_1^s$ is the measure \eqref{measure_mu_for_recurrence}.
\end{theorem}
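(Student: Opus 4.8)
The plan is to reduce the study of $\nu(P_{k,n})$ to the known spectral asymptotics of banded Toeplitz matrices along each ``frozen'' slice $s$, and then to patch these slices together by an averaging argument. First I would observe that, because the recurrence \eqref{recurrence_m} has the structure $xP_{k,n}=P_{k+1,n}+\sum_{j=0}^{m-2}b_{k,n}^{(j)}P_{k-j,n}$, the zeros of $P_{k,n}$ are exactly the eigenvalues of the $k\times k$ banded Hessenberg matrix $H_{k,n}$ whose entries are the coefficients $b_{k,n}^{(j)}$ (the non-varying analogue of \eqref{definition_toeplitz_matrix}). Thus $\nu(P_{k,n})$ is the normalized eigenvalue counting measure of $H_{k,n}$. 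The matrix $H_{k,n}$ is not Toeplitz, but along the diagonal band its entries vary \emph{slowly}: by \eqref{Convergence_coefficients_m}, the block of rows with index $j\approx sn$ looks, up to $o(1)$ errors, like the Toeplitz matrix $T(A_s)$ with symbol \eqref{sympbolsAs}. This is the standard ``locally Toeplitz'' picture.

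Next I would make the averaging precise. Fix a bounded continuous test function $f$ and a large integer $N$; partition $\{1,\dots,k\}$ into $N$ consecutive blocks of length $\approx k/N$, the $r$-th block corresponding to indices near $s_r n$ with $s_r = (r/N)\xi$. Using hypotheses (a) and (b) one controls how eigenvalues of leading principal submatrices of different sizes interlace and stay real, so that $\int f\,d\nu(P_{k,n})$ can be written as a Cesàro-type average $\frac1k\sum_{\ell=1}^k g_\ell$, where $g_\ell$ is (close to) a quantity attached to the $\ell\times\ell$ truncation and hence, for $\ell\approx sn$, close to $\int f\,d\mu_1^{s}$ by the Schmidt--Spitzer/Hirschman/Dragnev--Kuijlaars description \eqref{measure_mu_for_recurrence}. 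Concretely, the interlacing property in (a) lets one telescope: $k\int f\,d\nu(P_{k,n}) - (k-1)\int f\,d\nu(P_{k-1,n})$ is a single-eigenvalue contribution that localizes, as $k/n\to s$, to the ``edge'' of $\mu_1^s$; summing these increments and dividing by $k$ produces the Riemann sum $\frac1N\sum_{r=1}^N \int f\,d\mu_1^{s_r}$, which converges to $\frac1\xi\int_0^\xi\!\big(\int f\,d\mu_1^s\big)ds$ as $N\to\infty$. One then checks that the errors (from the $o(1)$ in \eqref{Convergence_coefficients_m}, from block boundaries, and from replacing $H$-truncations by genuine Toeplitz matrices) are uniformly small, using continuity of $s\mapsto \mu_1^s$ in the weak topology, which follows from the explicit formula \eqref{measure_mu_for_recurrence} and continuous dependence of the roots $z_j(x,s)$ of $A_s(z)=x$ on $s$.

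The main obstacle, I expect, is precisely this uniform comparison between the genuinely varying Hessenberg matrix $H_{k,n}$ and the family of constant-coefficient Toeplitz matrices $T_\ell(A_{s})$: one must show that slow variation of the band entries does not create spurious eigenvalues away from $\bigcup_{0<s\le\xi}\Gamma_1(s)$ and does not destroy the weak convergence of the counting measures on each scale. Hypothesis (b), $\Gamma_1(s)\subset\mathbb R$, is what keeps everything on the real line and makes the interlacing in (a) meaningful; without it the eigenvalues could wander into the complex plane and the ``averaging of real measures'' heuristic would break down. I would handle this by a potential-theoretic/resolvent estimate: test the measures against $\log|x-\cdot|$ rather than against arbitrary $f$, exploiting that $\frac1k\log|\det(xI-H_{k,n})| = \frac1k\log|P_{k,n}(x)|$, and that by the recurrence this logarithmic potential satisfies an approximate subharmonicity relation whose ``slices'' are the logarithmic potentials $U^{\mu_1^s}$. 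Passing to the limit in this relation, together with a standard unique–determination argument (a probability measure on $\mathbb R$ is determined by its logarithmic potential), identifies the weak limit as the claimed average \eqref{limitingmeasure}. The remaining steps — verifying tightness of $\{\nu(P_{k,n})\}$ from an a priori bound on the zeros furnished by (a), and upgrading $\log$-potential convergence to weak-$\ast$ convergence — are routine.
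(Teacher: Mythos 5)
First, note that the paper itself contains no proof of this statement: Theorem \ref{Theorem_zero_distribution_general} is quoted verbatim from Theorem 1.2 of \cite{KR}, so there is no internal argument to compare against, and your proposal has to stand on its own. Judged that way, it is a strategy outline whose decisive step is missing. Identifying the zeros of $P_{k,n}$ with the eigenvalues of the banded Hessenberg matrix built from \eqref{recurrence_m} is fine, but the core of your first argument --- partitioning the matrix into blocks with nearly constant coefficients and replacing each block by a frozen Toeplitz matrix $T_\ell(A_{s_r})$ --- is exactly what you yourself flag as ``the main obstacle,'' and it cannot be dismissed by saying the errors are ``uniformly small.'' These matrices are non-normal, and spectra of non-normal banded matrices are violently unstable under small or structured perturbations (this is already visible in the fact that the limiting eigenvalue set $\Gamma_1(s)$ of the truncations $T_k(A_s)$ is completely different from the spectrum of the infinite Toeplitz operator), so slow variation of the band entries does not yield closeness of eigenvalue counting measures through any standard perturbation bound. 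Likewise, the telescoping claim that $k\int f\,d\nu(P_{k,n})-(k-1)\int f\,d\nu(P_{k-1,n})$ ``localizes to the edge of $\mu_1^s$'' is unsupported: interlacing shows that consecutive counting measures differ by $O(1/k)$ in a weak sense, but it says nothing about where the discrepancy sits, and no Riemann sum $\frac1N\sum_r\int f\,d\mu_1^{s_r}$ emerges from it without the missing locally-Toeplitz input. Also, tightness does not come from hypothesis (a): realness, simplicity and interlacing alone do not bound the zeros; the a priori bound comes from the uniform boundedness of the recurrence coefficients $b^{(j)}_{\ell,n}$ for $\ell\le(\xi+\varepsilon)n$, i.e.\ from a norm bound on the Hessenberg matrix.

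The second half of your sketch (testing against $\log|x-\cdot|$ via $\frac1k\log|P_{k,n}(x)|=\frac1k\log|\det(xI-H_{k,n})|$) is the right family of ideas, and is much closer to how results of this type are actually established: one proves ratio asymptotics, namely that for $x$ off the relevant real sets the ratios $P_{k+1,n}(x)/P_{k,n}(x)$ form a normal family (here (a) and interlacing provide the needed bounds), that any subsequential limit satisfies the frozen equation $A_s(w)=x$ with the coefficients \eqref{Convergence_coefficients_m}, and that the limit must be the dominant solution $z_1(x,s)$; summing logarithms of ratios then gives $\frac1k\log|P_{k,n}(x)|\to\frac1\xi\int_0^\xi\log|z_1(x,s)|\,ds$, which, combined with \eqref{measure_mu_for_recurrence} and hypothesis (b) (so that all limit measures live on $\mathbb{R}$), identifies every weak limit of $\nu(P_{k,n})$ with $\frac1\xi\int_0^\xi\mu_1^s\,ds$. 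In your write-up, however, this route is only gestured at: the ``approximate subharmonicity relation'' is never defined, the passage from the recurrence to the slicewise potentials of $\mu_1^s$ is not carried out, and the consistency of the ratio limits across consecutive indices $k$, $k\pm1$ (the point where continuity of the $b^{(j)}$ really enters) is not addressed. So the proposal names the right objects but leaves both possible engines of the proof --- a genuine locally-Toeplitz comparison or a genuine ratio-asymptotics argument --- unproved, and as it stands it is not a proof.
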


It is worth noting that in \cite{CCVA},
the authors present a conditional theorem giving the
asymptotic zero distribution for polynomials satisfying a specific
four-term recurrence relation.

The aim of this paper is to give more insight on the nature of
the asymptotic zero distribution in the particular case of
type II multiple orthogonal polynomials associated with two
Macdonald functions (modified Bessel functions of the second kind)
$K_\nu(x)$ ($\nu\geq 0$). A feature of the present case is the appearance
of a vector equilibrium problem with two
external fields, for which the first component of the unique minimizer
is the weak limit of the normalized counting zero
measures.

We mainly follow the idea in \cite{KR}, where the authors consider a model of
non-intersecting squared Bessel paths and derive a vector equilibrium problem
for the limiting zero distribution of type II multiple orthogonal polynomials
associated with
the modified Bessel functions of the first kind \cite{CVA1,CVA2}. In that case,
the vector equilibrium problem involves two measures supported on the positive
real line and the negative real line, respectively, with an external field
acting on the first measure and a constraint acting on the second measure
\cite[Theorem 1.7]{KR}.


\section{Statement of results}

\subsection{Multiple orthogonal polynomials associated with Macdonald functions}

Assuming $x>0$, we define the scaled Macdonald function $\rho_\nu$ by
\begin{equation}
\rho_\nu(x)=2x^{\nu/2}K_\nu(2\sqrt{x}),
\end{equation}
and consider two weights
\begin{equation}\label{two measures}
d\mu_1(x)=x^{\alpha}\rho_\nu(x) dx,\quad
d\mu_2(x)=x^{\alpha}\rho_{\nu+1}(x) dx,\quad \alpha>-1, \quad
\nu\geq 0,
\end{equation}
on the positive real line. For any $k,m\in\mathbb{N}$, the type II
multiple orthogonal polynomials $p_{k,m}^{\alpha}$ for the system of
weights $(\mu_1,\mu_2)$ are such that $p_{k,m}^{\alpha}$ is a monic
polynomial of degree $k+m$ and satisfies the following multiple
orthogonality conditions:
\begin{align}
\int_0^{\infty}p_{k,m}^{\alpha}(x)x^j d\mu_1(x)&=0, \quad
j=0,1,...,k-1,
\\
\int_0^{\infty}p_{k,m}^{\alpha}(x)x^j d\mu_2(x)&=0, \quad
j=0,1,...,m-1.
\end{align}
By taking $m=k$, we set
\begin{align*}
P_{2k}(x)=p_{k,k}^{\alpha}(x),\qquad
P_{2k+1}(x)=p_{k+1,k}^{\alpha}(x).
\end{align*}
An explicit formula for $P_k$ is given by
\begin{equation}\label{Pn}
P_k(x)=\sum_{j=0}^{k}a_k(j)x^{k-j},
\end{equation}
where
\begin{equation*}
a_k(j)=(-1)^j\binom{k}{j}\frac{(\alpha+1)_k(\alpha+\nu+1)_k}
{(\alpha+1)_{k-j}(\alpha+\nu+1)_{k-j}},\quad 0\leq j \leq k;
\end{equation*}
see \cite[Theorem 2]{CVA2001}. It is shown in \cite{VAY} that $P_k$
satisfies the following four-term recurrence relation
\begin{equation}
\label{eq:recurrence_relation}
x P_k(x) = P_{k+1}(x) + b_k P_k(x) + c_k P_{k-1}(x) + d_k P_{k-2}(x)
\end{equation}
with recurrence coefficients
\begin{equation} \label{eq:coefficientsbcdk}
\begin{aligned}
    b_{k} & = (k+\alpha+1)(3k+\alpha+2\nu) - (\alpha+1)(\nu-1), \\
    c_{k} & = k (k+\alpha) (k+\alpha+\nu) (3k+2\alpha+\nu), \\
    d_{k} & = k(k-1)(k+\alpha-1)(k+\alpha)(k+\alpha+\nu-1)(k+\alpha+\nu).
    \end{aligned}
\end{equation}

These polynomials constitute one of few examples of multiple orthogonal
polynomials that are not related to the classical orthogonal
polynomials. They are first introduced by Van Assche and Yakubovich
in \cite{VAY}, which solve an open problem posed by Prunikov
\cite{PRU}; see also \cite{CD,CCVA} for recent study.

Our goal is to investigate the limiting zero distribution of scaled
polynomials $P_{k}$. Namely, we introduce a new parameter $n\in
\mathbb{N}$ and put
\begin{equation}\label{scaled P(k,n)}
P_{k,n}(x):=\frac{P_k(n^2 x)}{n^{2k}}.
\end{equation}
Clearly, $P_{k,n}(x)$ is a polynomial of degree $k$ for each $n$. In
view of \eqref{eq:recurrence_relation}--\eqref{scaled P(k,n)}, it
is readily seen that $P_{k,n}(x)$ satisfies the following
recurrence relation
\begin{equation}
\label{eq:recurrence_P_doubly_indexed}
x P_{k,n}(x) = P_{k+1,n}(x) +
b_{k,n} P_{k,n}(x) + c_{k,n} P_{k-1,n}(x) + d_{k,n} P_{k-2,n}(x),
\end{equation}
with recurrence coefficients given by
\begin{equation} \label{eq:coefficientsbcdkn}
\begin{aligned}
    b_{k,n} & = \frac{(k+\alpha+1)(3k+\alpha+2\nu)-(\alpha+1)(\nu-1)}{n^2}, \\
    c_{k,n} & = \frac{k (k+\alpha) (k+\alpha+\nu)
    (3k+2\alpha+\nu)}{n^4}, \\
    d_{k,n} & = \frac{k(k-1)(k+\alpha-1)(k+\alpha)(k+\alpha+\nu-1)(k+\alpha+\nu)}
    {n^6}.
    \end{aligned}
    \end{equation}

As in \eqref{def: counting measure}, the normalized counting
zero measure of $P_{k,n}$ is defined by
\begin{equation}
\nu(P_{k,n})=\frac{1}{k}\sum_{P_{k,n}(x)=0} \delta_{x}.
\end{equation}
We will derive a vector equilibrium problem with two external fields
and show that the first component of the equilibrium vector is the
weak limit of $\nu(P_{k,n})$ as $k, n \to \infty$ with $k/n \to \xi
> 0$. The equilibrium vector itself can be explicitly given in terms
of the solutions of an algebraic equation. Our results are actually
rather general in the sense that we also allow the parameters
$\alpha$ and $\nu$ to increase proportionally to $n$ as $n$
increases.

\subsection{Statement of results}\label{subsec:statement of results}
We scale the parameters $\alpha$ and $\nu$ in the following way:
\begin{equation}\label{scaling of parameters}
\alpha \mapsto pn, \qquad \nu \mapsto qn,
\end{equation}
with $p,q>0$. The results corresponding to $\alpha$ and $\nu$ fixed
can be obtained by taking the limits as $p,q\to 0$, respectively.

Let $k,n\to \infty$ in such a way that $k/n\to s$, for some $s\geq
0$, we then observe that the recurrence coefficients
\eqref{eq:coefficientsbcdkn} have scaling limits $b(s)$, $c(s)$ and
$d(s)$ given by
\begin{equation}
\label{scaling_limits2}
\begin{split}
\lim_{k/n \to s} b_{k,n}&=b(s)=3s^2+4sp+2sq+p^2+pq,\\
\lim_{k/n \to s} c_{k,n}&=c(s)=s(s+p)(s+p+q)(3s+2p+q),\\
\lim_{k/n \to s} d_{k,n}&=d(s)=s^2(s+p)^2(s+p+q)^2.
\end{split}
\end{equation}

Clearly, these limits depend on $p$ and $q$. As in
\eqref{sympbolsAs}, we have the associated family of symbols
\begin{align} \label{family_of_symbols0}
    A_s(z) & =z+b(s)+c(s)z^{-1}+d(s)z^{-2},
\end{align}
and the solutions $z_1(x,s)$, $z_2(x,s)$ and $z_3(x,s)$ of the
algebraic equation $A_s(z)= x$. We define $\Gamma_1(s)$ as in
\eqref{Gamma1s} and similarly
\begin{equation} \label{Gamma2s}
    \Gamma_2(s) = \{ x \in \mathbb{C} \mid |z_2(x,s)| = |z_3(x,s)| \}.
    \end{equation}

The following
proposition ensures that the polynomials $P_{k,n}$ satisfy the
hypothesis (a) of Theorem \ref{Theorem_zero_distribution_general}.

\begin{prop}
\label{Interlacing_Bk} Let $p,q >0$. Then the polynomials $P_{k,n}$
generated by \eqref{eq:recurrence_P_doubly_indexed} with recurrence
coefficients \eqref{eq:coefficientsbcdkn} have real and simple zeros
in $(0,\infty)$ with the interlacing property.
\end{prop}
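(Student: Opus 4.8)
The plan is to exploit the positivity structure hidden in the recurrence coefficients \eqref{eq:coefficientsbcdkn} rather than to work directly with the analytic theory of multiple orthogonal polynomials. First I would observe that, since $p,q>0$ and hence $\alpha=pn>0$, $\nu=qn\geq 0$, every factor appearing in $b_k$, $c_k$, $d_k$ is positive once $k\geq 2$; in particular $b_{k,n}>0$, $c_{k,n}>0$ and $d_{k,n}\geq 0$ for all relevant $k$ (with $d_{k,n}=0$ only for $k=0,1$). The key algebraic fact to extract is that the recurrence \eqref{eq:recurrence_P_doubly_indexed} is, up to a diagonal conjugation, that of a \emph{Hessenberg matrix with nonnegative entries whose associated Jacobi-type pattern factors through a positive-definite Hankel/Gram structure} — equivalently, that $P_{k,n}$ are the type II multiple orthogonal polynomials (the near-diagonal sequence) for the Nikishin-type system coming from the two Macdonald weights in \eqref{two measures}, which with $\alpha>-1$, $\nu\geq 0$ form an AT-system on $(0,\infty)$.

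Concretely, I would carry out the following steps. Step 1: record that for a Nikishin/AT-system the near-diagonal multiple orthogonal polynomials $P_{2k}=p_{k,k}^{\alpha}$, $P_{2k+1}=p_{k+1,k}^{\alpha}$ have all their zeros real, simple, and located in the interior of the convex hull of the support, here $(0,\infty)$; this is classical (Nikishin–Sorokin; for the Macdonald case the AT-property is verified in \cite{VAY,CVA2001}). Since the scaling \eqref{scaled P(k,n)} is an affine (indeed linear) change of variable $x\mapsto n^2x$ with $n^2>0$, the zeros of $P_{k,n}$ are those of $P_k$ divided by $n^2$, hence they remain real, simple and in $(0,\infty)$. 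Step 2: establish the interlacing $x_j^{k+1,n}<x_j^{k,n}<x_{j+1}^{k+1,n}$. For this I would use the standard argument that the sign changes of consecutive multiple orthogonal polynomials of a Nikishin system interlace: either invoke the Christoffel–Darboux / ratio-asymptotic structure for AT-systems, or, more self-containedly, use the four-term recurrence itself together with the positivity of $c_{k,n}$ and $d_{k,n}$. The recurrence can be rewritten as a banded lower-Hessenberg eigenvalue problem, and after the diagonal similarity that symmetrizes the $c$-subdiagonal one gets a (non-symmetric, but sign-regular) banded matrix; the principal truncations of a totally-nonnegative-type banded matrix have interlacing spectra. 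I would check total nonnegativity of the relevant $2\times 2$ minors using the explicit products in \eqref{eq:coefficientsbcdkn}: the only nontrivial inequality is of the form $c_{k,n}\,c_{k+1,n}\ge b_{k,n}\,d_{k+1,n}$ (and its relatives), which reduces to a polynomial inequality in $k,p,q$ that holds termwise because each bracket on the left dominates the corresponding bracket on the right for $p,q>0$.

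The main obstacle is Step 2. Real and simple zeros in $(0,\infty)$ is essentially on the shelf once one cites the AT-property of the pair $(\mu_1,\mu_2)$, but interlacing for a genuinely four-term (not three-term) recurrence is not automatic: the tridiagonal Sturm-sequence argument does not carry over verbatim, and positivity of $b_{k,n},c_{k,n},d_{k,n}$ alone is not sufficient in general. I expect the cleanest route is the multiple-orthogonality one: for AT-systems it is known that $p_{k+1,k}^{\alpha}$ has exactly one more sign change than $p_{k,k}^{\alpha}$ and that their sign changes strictly interlace (this is where the ``stepline'' structure is used), and likewise passing from $p_{k+1,k}^{\alpha}$ to $p_{k+1,k+1}^{\alpha}$. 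Chaining these two interlacings along the diagonal ladder $P_k\to P_{k+1}$ gives exactly hypothesis (a). I would therefore devote the bulk of the proof to making this sign-change interlacing precise for the Macdonald weights, using that $\rho_\nu$ and $\rho_{\nu+1}$ are positive on $(0,\infty)$ and that $\rho_\nu,\rho_{\nu+1}$ together with $x^\alpha$ form a Chebyshev system on $(0,\infty)$; the scaling $x\mapsto n^2x$ and the parameter scaling \eqref{scaling of parameters} are harmless since they only rescale the variable and keep $\alpha>-1$, $\nu\geq 0$.
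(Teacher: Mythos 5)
Your preferred route is essentially the paper's own proof: the paper simply notes that $(\mu_1,\mu_2)$ from \eqref{two measures} forms an AT system on $(0,\infty)$, which yields real, simple zeros in $(0,\infty)$ (citing \cite{NS,VAC}), and obtains the interlacing of the consecutive stepline polynomials $p_{k,k}^{\alpha}$, $p_{k+1,k}^{\alpha}$, $p_{k+1,k+1}^{\alpha}$ by citing \cite{AKLR}, the rescaling $x\mapsto n^2x$ being harmless exactly as you say. Your alternative Hessenberg/total-nonnegativity detour is not used in the paper (and, as you yourself flag, would require genuine additional work), but your main AT-system sign-change-interlacing plan is the same argument, spelled out rather than cited.
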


\begin{proof}
The proof of the proposition follows from the fact that the measures $(d\mu_1,d\mu_2)$ from
\eqref{two measures} form an AT system (cf. \cite{NS,VAC,CCVA}),
which implies all the zeros of $P_k$ are simple, lie in
$(0,+\infty)$ \cite{NS,VAC}, and satisfy the interlacing property
\cite{AKLR}.
\end{proof}

By Proposition \ref{Proposition_Gamma_1_2_alpha} stated below, it is easily seen that the hypothesis (b) is also satisfied. Therefore, we see from
Theorem \ref{Theorem_zero_distribution_general} that, with the scaling given in \eqref{scaling of parameters}, the probability measure
\begin{align} \label{nu1xi-as-integral}
    \nu_1^{\xi} = \frac{1}{\xi} \int_0^{\xi} \mu_1^s \, ds, \qquad
    \xi>0,
\end{align}
is the weak limit of the normalized zero counting measures. The main result
of this paper is that $\nu_1^\xi$
can also be obtained as the first component of a vector of measures $(\nu_1^{\xi}, \nu_2^{\xi})$ that satisfies a vector equilibrium problem with two external
fields.

To define $\nu_2^{\xi}$, we need to introduce the second measure
$\mu_2^s$, which is supported on $\Gamma_2(s)$ (see \eqref{Gamma2s})
and given by
\begin{equation} \label{eq:densitymu2}
    d\mu^s_2(x) = \frac{1}{2\pi i}
    \left(\frac{z'_{2-}(x,s)}{z_{2-}(x,s)}-\frac{z'_{2+}(x,s)}{z_{2+}(x,s)}
    \right)dx, \qquad x \in \Gamma_2(s).
    \end{equation}
It is a positive measure on $\Gamma_2(s)$
with total mass $1/2$. For each $\xi>0$, we define $\nu_2^{\xi}$ in a manner similar to the
definition of $\nu_1^{\xi}$ in \eqref{nu1xi-as-integral}, i.e.,
\begin{align} \label{nu2xi-as-integral}
    \nu_2^{\xi} = \frac{1}{\xi} \int_0^{\xi} \mu_2^s \, ds.
    \end{align}
Then $\nu_2^{\xi}$ is a measure on
$\bigcup_{s<\xi}\Gamma_2(s)=\Gamma_2(\xi)$ with total mass $1/2$.

An essential point for the rest of the paper is
the main result of \cite{DK1}, which asserts that the vector of measures
$(\mu_1^s, \mu_2^s)$ is characterized by a vector equilibrium problem.
In the present context, this is stated in the following theorem.

\begin{theorem} \label{equilibrium_problem_Toeplitz}
For each $s > 0$, the vector $(\mu_1^s, \mu_2^s)$ is the unique
minimizer for the energy functional
\begin{multline}
\label{equilibrium_problem_mu1_mu2}
    \iint \log \frac{1}{|x-y|} d\mu_1(x)d\mu_1(y) +  \iint \log
    \frac{1}{|x-y|} d\mu_2(x)d\mu_2(y)\\
    - \iint \log \frac{1}{|x-y|}d\mu_1(x)d\mu_2(y)
\end{multline}
among all vectors $(\mu_1, \mu_2)$ satisfying $\supp(\mu_j) \subset
\Gamma_j(s)$ for $j=1,2$, and
\[ \int d \mu_1 = 1, \qquad \int d\mu_2 = \frac{1}{2}. \]

The measures $\mu_1^s$ and $\mu_2^s$ satisfy the following
Euler-Lagrange variational conditions:
\begin{align}
\label{variational_mu_1} 2\int \log |x-y|d\mu^s_1(y)-\int \log
|x-y|d\mu^s_{2}(y) & =\ell^s, \qquad x\in \Gamma_1(s),
\end{align}
for some constant $\ell^s$, and
\begin{align}\label{variational_mu_2} 2\int \log
|x-y|d\mu^s_2(y)-\int \log |x-y|d\mu^s_{1}(y) & =0, \qquad \, x\in
\Gamma_2(s).
\end{align}
\end{theorem}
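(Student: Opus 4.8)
This theorem is the specialization, to the symbol \eqref{family_of_symbols0}, of the general equilibrium characterization of \cite{DK1}, and the plan is to derive it directly from the analytic properties of the three branches $z_1(x,s),z_2(x,s),z_3(x,s)$ of the algebraic equation $A_s(z)=x$; the only structural fact one must check is that $A_s$ falls in the admissible class of \cite{DK1}, i.e.\ that it is a Laurent polynomial with leading coefficient $1$ in $z$ and with $d(s)\neq 0$ for every $s>0$, which is immediate from \eqref{scaling_limits2}. First I would record the global structure of the branches. Writing $A_s(z)=x$ as the cubic $z^{3}+(b(s)-x)z^{2}+c(s)z+d(s)=0$, Vieta's formulas give $z_1z_2z_3\equiv -d(s)$ and hence $z_1'/z_1+z_2'/z_2+z_3'/z_3\equiv 0$. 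The branch $z_1$ is single-valued and meromorphic on $\widehat{\mathbb{C}}\setminus\Gamma_1(s)$ with a single simple pole, at $x=\infty$, where $z_1(x,s)=x+O(1)$; the branches $z_2,z_3$ both tend to $0$ at infinity, behaving like $\pm\sqrt{d(s)/x}$, so that $x=\infty$ is a square-root branch point joining the two small sheets and is an endpoint of the (unbounded) arc $\Gamma_2(s)$. In particular $z_1'/z_1$ is analytic off $\Gamma_1(s)$ with $z_1'/z_1\sim 1/x$ at infinity, while $z_3'/z_3$ is analytic off $\Gamma_2(s)$ with $-z_3'/z_3\sim \tfrac{1}{2x}$ at infinity.

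Next I would identify the Cauchy transforms. Comparing boundary values across $\Gamma_1(s)$ by the Sokhotski--Plemelj formula and the definition \eqref{measure_mu_for_recurrence}, one sees that $z_1'/z_1-\int(x-y)^{-1}\,d\mu_1^s(y)$ is analytic across $\Gamma_1(s)$, has no other singularities, and vanishes at $x=\infty$, hence is identically zero; likewise, using \eqref{eq:densitymu2}, $-z_3'/z_3=\int(x-y)^{-1}\,d\mu_2^s(y)$, and then $z_2'/z_2=\int(x-y)^{-1}\,d(\mu_2^s-\mu_1^s)(y)$ by the Vieta relation above. Integrating and fixing the additive constants from the behavior at infinity (using $\int d\mu_1^s=1$ and $\int d\mu_2^s=\tfrac{1}{2}$), one obtains, with $u_\mu(x):=\int\log|x-y|\,d\mu(y)$,
\begin{align*}
\log|z_1(x,s)| &= u_{\mu_1^s}(x),\\
\log|z_3(x,s)| &= -u_{\mu_2^s}(x)+\tfrac{1}{2}\log d(s),\\
\log|z_2(x,s)| &= u_{\mu_2^s}(x)-u_{\mu_1^s}(x)+\tfrac{1}{2}\log d(s),
\end{align*}
the last identity being consistent with the first two since $|z_1z_2z_3|\equiv d(s)$. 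The variational conditions then follow at once: on $\Gamma_1(s)$ the relation $|z_1|=|z_2|$ reads $2u_{\mu_1^s}(x)-u_{\mu_2^s}(x)=\tfrac{1}{2}\log d(s)=:\ell^s$, which is \eqref{variational_mu_1}; on $\Gamma_2(s)$ the relation $|z_2|=|z_3|$ reads $2u_{\mu_2^s}(x)-u_{\mu_1^s}(x)=0$, which is \eqref{variational_mu_2}.

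To pass from the variational conditions to the minimization statement I would invoke the standard theory of vector equilibrium problems: the interaction matrix $\left(\begin{smallmatrix}1&-\tfrac{1}{2}\\-\tfrac{1}{2}&1\end{smallmatrix}\right)$ of the energy \eqref{equilibrium_problem_mu1_mu2} is positive definite, so that energy is strictly convex and weakly lower semicontinuous on the weakly compact convex set of pairs $(\mu_1,\mu_2)$ with $\supp\mu_j\subset\Gamma_j(s)$, $\int d\mu_1=1$ and $\int d\mu_2=\tfrac{1}{2}$; hence the minimizer exists and is unique, and it is characterized by Euler--Lagrange conditions asserting that $2u_{\mu_1}-u_{\mu_2}$ and $2u_{\mu_2}-u_{\mu_1}$ are constant on $\supp\mu_1$ and $\supp\mu_2$ respectively (together with the appropriate inequalities on $\Gamma_1(s)$ and $\Gamma_2(s)$). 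Since, by the previous step, $(\mu_1^s,\mu_2^s)$ satisfies these conditions --- with equality everywhere on $\Gamma_1(s)$, resp.\ $\Gamma_2(s)$, because $\supp\mu_j^s=\Gamma_j(s)$ --- it is that unique minimizer.

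The main obstacle is the global analytic bookkeeping behind the first two steps: one has to pin down the full ramification structure of the Riemann surface of $A_s(z)=x$ --- the finite branch points, the behavior at the exceptional points $z=0$ and $z=\infty$ of the symbol, and the fact (essentially the content of \cite{SS,Hirschman,BG,DK1}, together with Proposition \ref{Proposition_Gamma_1_2_alpha}, which places $\Gamma_1(s)$ and $\Gamma_2(s)$ on $\mathbb{R}$) that $\Gamma_1(s)$ and $\Gamma_2(s)$ are disjoint unions of analytic arcs --- so as to be certain that $z_1'/z_1$ and $z_3'/z_3$ carry no spurious poles and that the Liouville-type argument really closes, in particular near the unbounded end of $\Gamma_2(s)$, where $\mu_2^s$ has an infinite first moment and the expansions at infinity proceed only in powers of $x^{-1/2}$. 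The identification of the additive constants, though routine, itself relies on the square-root behavior of $z_2,z_3$ at infinity forced by the $z^{-2}$ term in \eqref{family_of_symbols0}.
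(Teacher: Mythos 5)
The paper itself gives no proof of Theorem \ref{equilibrium_problem_Toeplitz}: it is quoted as the main result of \cite{DK1}, specialized to the symbol \eqref{family_of_symbols0}, and the only ingredient of its proof the paper uses later is the pair of pointwise identities \eqref{variational_mu_1_log}--\eqref{variational_mu_2_log}, again cited from the proof of Theorem 2.3 of \cite{DK1}. Your proposal instead reconstructs that proof in the cubic case, and the reconstruction is essentially sound: the jump comparison with \eqref{measure_mu_for_recurrence} and \eqref{eq:densitymu2}, the Vieta relation $z_1z_2z_3\equiv -d(s)$, and the normalizations at infinity give, with $u_\mu(x)=\int\log|x-y|\,d\mu(y)$, exactly $\log|z_1|=u_{\mu_1^s}$, $\log|z_3|=-u_{\mu_2^s}+\tfrac12\log d(s)$, $\log|z_2|=u_{\mu_2^s}-u_{\mu_1^s}+\tfrac12\log d(s)$, which are \eqref{variational_mu_1_log}--\eqref{variational_mu_2_log} in integrated form and yield \eqref{variational_mu_1}--\eqref{variational_mu_2} with the explicit value $\ell^s=\tfrac12\log d(s)$ and a transparent reason why the constant on $\Gamma_2(s)$ is forced to be $0$ (masses $1$ and $\tfrac12$ plus the unbounded end of $\Gamma_2(s)$). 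What the citation buys, by contrast, is that all the analytic bookkeeping you defer (removability of the endpoint singularities of $z_j'/z_j$, the $x^{-1/2}$ expansions and the finiteness of the potential of $\mu_2^s$ on its unbounded support) is carried out in \cite{DK1}; as you note, that is where the real work lies. One correction to your last step: since $\Gamma_2(s)=(-\infty,\eta(s)]$ is unbounded, the admissible set is \emph{not} weakly compact, so existence of a minimizer does not follow from ``compactness plus lower semicontinuity'' as stated; this is harmless, however, because strict convexity of \eqref{equilibrium_problem_mu1_mu2} (positive definiteness of the interaction matrix) together with the fact that $(\mu_1^s,\mu_2^s)$ satisfies the variational conditions with equality on all of $\Gamma_1(s)$ and $\Gamma_2(s)$ already implies it is the unique minimizer among admissible vectors of finite energy, which is how \cite{DK1} argues and how you should phrase it.
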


We shall obtain the equilibrium problem for the vector of measures
$(\nu_1^\xi,\nu_2^\xi)$ by integrating the variational conditions
\eqref{variational_mu_1} and \eqref{variational_mu_2} with respect
to the variable $s$. The main difficulty lies in the fact that
$\Gamma_1(s)$ and $\Gamma_2(s)$ are varying with $s$.
Hence, it is necessary to study how these contours depend on $s$.
The next proposition reveals that $\Gamma_1(s)$ and $\Gamma_2(s)$ are
indeed real intervals and actually are increasing as $s$ increases.
The monotonicity of $\Gamma_1(s)$ and $\Gamma_2(s)$, as we will see
later, plays a important role in
the derivation of the equilibrium problem.

\begin{prop}
\label{Proposition_Gamma_1_2_alpha} For each $s
> 0$, we have that $\Gamma_1(s) \subset (0,\infty)$ and $\Gamma_2(s)
\subset (-\infty,0)$. More precisely, there exist $\eta(s) < 0 <
\beta(s) < \gamma(s) $ so that
\begin{align} \label{Gamma12s-in-second-scaling}
    \Gamma_1(s) = [\beta(s), \gamma(s)], \qquad
    \Gamma_2(s) = (-\infty, \eta(s)].
    \end{align}
In addition, we have
\begin{enumerate}
\item[\rm (a)] $\gamma(s)$ is positive and strictly increasing for $s > 0$,
with $\lim_{s\to 0+} \gamma(s)=p(p+q)$  and $\lim_{s\to \infty}
\gamma(s)=\infty$,
\item[\rm (b)] $\beta(s)$ is positive and strictly
decreasing for $s > 0$ with
$\lim_{s\to 0+} \beta(s)=p(p+q)$ and $\lim_{s\to \infty}
\beta(s)=0$,
\item[\rm (c)] $\eta(s)$ is
negative and strictly increasing for $s > 0$ with $\lim_{s\to 0+}
\eta(s)=-q^2/4$ and $\lim_{s\to \infty} \eta(s)=0$.
\end{enumerate}
\end{prop}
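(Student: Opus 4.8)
The plan is to exploit a factorization of the symbol \eqref{family_of_symbols0} which makes the whole geometry transparent. Writing $r_1=s<r_2=s+p<r_3=s+p+q$, one checks, by expanding the numerator and comparing with \eqref{scaling_limits2}, that
\[
A_s(z)=\frac{(z+r_1r_2)(z+r_1r_3)(z+r_2r_3)}{z^{2}}.
\]
Hence $z\mapsto A_s(z)$ is a rational map of degree $3$, the curve $A_s(z)=x$ has genus $0$ with $z$ as global coordinate, and $z_1,z_2,z_3$ are its three inverse branches. The finite branch points are the critical values $A_s(\zeta)$, $\zeta$ a root of $A_s'(z)=0$, i.e.\ of $z^{3}-c(s)z-2d(s)=0$. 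Because $c(s)^{3}>27\,d(s)^{2}$ for $p,q>0$ by the AM--GM inequality (equality only at $p=q=0$), this cubic has three distinct real roots; its $z^{2}$-coefficient is $0$ and its constant term $-2d(s)<0$, so exactly one root $\zeta_3$ is positive and two roots $\zeta_1<\zeta_2$ are negative. At a critical point one also has $A_s(\zeta)=2\zeta+b(s)-d(s)/\zeta^{2}$.

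The factorization gives the behaviour of $A_s$ on the real line at once: $A_s$ is real on $\mathbb{R}\setminus\{0\}$, blows up to $+\infty$ at $z=0^{\pm}$ and at $z=+\infty$, goes to $-\infty$ at $z=-\infty$, and vanishes exactly at $z=-r_1r_2,-r_1r_3,-r_2r_3$; a sign count then gives $A_s<0$ on $(-\infty,-r_2r_3)\cup(-r_1r_3,-r_1r_2)$ and $A_s>0$ on $(-r_2r_3,-r_1r_3)\cup(-r_1r_2,0)\cup(0,\infty)$. Since $A_s$ has exactly three real critical points, one sits in each of $(-r_2r_3,-r_1r_3)$, $(-r_1r_3,-r_1r_2)$ and $(0,\infty)$, and $A_s$ is monotone on the two remaining intervals. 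Therefore $\gamma(s):=A_s(\zeta_3)=\min_{z>0}A_s>0$, $\beta(s):=A_s(\zeta_1)=\max_{(-r_2r_3,-r_1r_3)}A_s>0$, and $\eta(s):=A_s(\zeta_2)=\min_{(-r_1r_3,-r_1r_2)}A_s<0$, so the signs $\eta(s)<0<\beta(s),\gamma(s)$ are immediate; moreover $\gamma(s)-\beta(s)=(\zeta_3-\zeta_1)\bigl(2+d(s)(\zeta_1+\zeta_3)/(\zeta_1^{2}\zeta_3^{2})\bigr)>0$ because $\zeta_1+\zeta_3=-\zeta_2>0$.

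For the contours: if $x\in(\beta(s),\gamma(s))$ the picture above shows $A_s(z)=x$ has exactly one real solution, in $(-r_1r_2,0)$, so the other two solutions are a complex-conjugate pair; using $z_1z_2z_3=-d(s)$ and $d(s)>(r_1r_2)^{3}>|z_{\mathrm{real}}|^{3}$ one gets $|z_1|^{2}=|z_2|^{2}=d(s)/|z_{\mathrm{real}}|>|z_{\mathrm{real}}|^{2}$, so this pair has the two largest moduli and $x\in\Gamma_1(s)$; hence $[\beta(s),\gamma(s)]\subseteq\Gamma_1(s)$. Similarly, for $x<\eta(s)$ the unique real solution lies in $(-\infty,-r_2r_3)$, so $|z_{\mathrm{real}}|^{3}>(r_2r_3)^{3}>d(s)$, the conjugate pair has $|z_2|^{2}=|z_3|^{2}=d(s)/|z_{\mathrm{real}}|<|z_{\mathrm{real}}|^{2}$, and $x\in\Gamma_2(s)$, giving $(-\infty,\eta(s)]\subseteq\Gamma_2(s)$. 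For the reverse inclusions I would use that $\Gamma_1(s),\Gamma_2(s)$ are finite unions of analytic arcs whose endpoints are among the (all real) branch points, that the collisions at $\zeta_1,\zeta_3$ are between the two roots of largest modulus while the one at $\zeta_2$ is between the two of smallest modulus (again from the real-line picture, as the third root stays in $(-r_1r_2,0)$, resp.\ in $(-\infty,-r_2r_3)$), so arcs of $\Gamma_1(s)$ can end only at $\beta(s)$ or $\gamma(s)$ and those of $\Gamma_2(s)$ only at $\eta(s)$; since $|z_1|\gg|z_2|$ near $x=\infty$ (so $\Gamma_1(s)$ cannot reach $\infty$) and $\log|z_1(x)|$ is subharmonic with Riesz mass carried by $\Gamma_1(s)$ (which precludes closed components), $\Gamma_1(s)$ is exactly the segment $[\beta(s),\gamma(s)]$, and by conjugation symmetry $\Gamma_2(s)$ is the single arc from $\eta(s)$ to $\infty$ along $(-\infty,0)$.

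It remains to prove (a)--(c). For $\gamma$: each factor $z+r_ir_j$ is strictly increasing in $s$ and positive for $z>0$, so $s\mapsto A_s(z)$ is strictly increasing on $(0,\infty)$ for fixed $z>0$, whence $\gamma(s')=A_{s'}(\zeta_3(s'))>A_s(\zeta_3(s'))\ge\gamma(s)$ for $s'>s$. For $\beta$ and $\eta$ I would differentiate $\beta(s)=A_s(\zeta_1(s))$ and $\eta(s)=A_s(\zeta_2(s))$; by criticality the $z$-derivative drops out, leaving $\beta'(s)=A_s(\zeta_1)\sum_{i<j}\frac{r_i+r_j}{\zeta_1+r_ir_j}$ and likewise for $\eta$, so (since $\beta>0$ and $\eta<0$) the monotonicity of both reduces to showing this bracketed sum is negative at $\zeta_1$ and at $\zeta_2$ — this is the main obstacle, and it requires pinning down the locations of $\zeta_1(s)\in(-r_2r_3,-r_1r_3)$ and $\zeta_2(s)\in(-r_1r_3,-r_1r_2)$ precisely enough (a secondary technical point is the no-spurious-components argument in the previous paragraph). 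The limits then come from the asymptotics of the critical points: as $s\to0^{+}$ one has $\zeta_{1,3}(s)\sim\mp\sqrt{p(p+q)(2p+q)}\,\sqrt s$ and $\zeta_2(s)\sim-\tfrac{2p(p+q)}{2p+q}\,s$, and substituting into $A_s(\zeta)=2\zeta+b(s)-d(s)/\zeta^{2}$ with $b(s)\to p(p+q)$ gives $\gamma(s),\beta(s)\to p(p+q)$ and $\eta(s)\to p(p+q)-\tfrac14(2p+q)^{2}=-q^{2}/4$; while as $s\to\infty$, $\zeta_3(s)\sim2s^{2}$ and $\zeta_1(s),\zeta_2(s)\sim-s^{2}$ give $\gamma(s)\sim\tfrac{27}{4}s^{2}\to\infty$ and $\beta(s),\eta(s)\to0$, the two-sidedness $\beta(s)\to0^{+}$, $\eta(s)\to0^{-}$ coming from the next-order term in $\zeta_1,\zeta_2$.
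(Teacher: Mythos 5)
Your forward inclusions and part of (a) are correct, and in places slicker than the paper: the product-of-roots identity $z_1z_2z_3=-d(s)$ neatly shows that for $x\in(\beta(s),\gamma(s))$ the conjugate pair carries the two largest moduli and for $x<\eta(s)$ the two smallest, and your observation that $s\mapsto A_s(z)$ is strictly increasing for $z>0$ gives the monotonicity of $\gamma$ directly. But the proposal has two genuine gaps. The first is the reverse inclusion, i.e.\ that $\Gamma_1(s)$ and $\Gamma_2(s)$ contain nothing else. The paper's mechanism is Lemma \ref{lem:x_in_R}: since all zeros of $A_s$ are negative, $\theta\mapsto|A_s(\rho e^{i\theta})|$ is strictly decreasing on $[0,\pi]$, so two distinct equal-modulus solutions of $A_s(z)=x$ must be complex conjugates; hence $\Gamma_1(s)\cup\Gamma_2(s)\subset\mathbb{R}$, after which a count of real roots, together with the connectedness of $\Gamma_1(s)$ quoted from Ullman and \cite[Theorem 11.19]{BG}, pins down $\Gamma_1(s)=[\beta(s),\gamma(s)]$ and $\Gamma_2(s)=(-\infty,\eta(s)]$. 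Your substitute --- arcs end only at the (real) branch points, the collisions at $\zeta_1,\zeta_3$ involve the top two moduli, ``no closed components by subharmonicity'', and ``conjugation symmetry'' --- does not close the argument: conjugation symmetry only says the sets are symmetric about $\mathbb{R}$, the subharmonicity remark is an assertion rather than a proof, and even granting both, nothing you say excludes a conjugate pair of non-real arcs of $\Gamma_1(s)$ joining $\beta(s)$ to $\gamma(s)$ through the upper and lower half-planes, or unbounded non-real arcs of $\Gamma_2(s)$; such configurations are neither closed components nor ruled out by your endpoint bookkeeping. To repair this you would need either a reality statement of the type of Lemma \ref{lem:x_in_R}, or a local analysis at each branch point showing that exactly one arc of $\{|z_1|=|z_2|\}$ (resp.\ $\{|z_2|=|z_3|\}$) emanates from $\beta(s),\gamma(s)$ (resp.\ $\eta(s)$), combined with the cited connectedness/structure theorems.

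The second gap is the one you flag yourself: the strict monotonicity of $\beta$ and $\eta$, which is the content of parts (b) and (c). Your envelope computation correctly reduces it to the sign of $\sum_{i<j}(r_i+r_j)/(\zeta+r_ir_j)$ at $\zeta_1$ and $\zeta_2$, but that sign is genuinely not determined by the crude localizations $\zeta_1\in(-r_2r_3,-r_1r_3)$ and $\zeta_2\in(-r_1r_3,-r_1r_2)$: in each case the three terms do not all have the same sign. This is exactly where the paper introduces the change of variables \eqref{B(z,s)}, $B(z,s)=z(z+p)(z+p+q)/(z-s)$ with $z=s+s(s+p)(s+p+q)/y$, for which the envelope derivative becomes $\beta'(s)=\hat z_1(\hat z_1+p)(\hat z_1+p+q)/(\hat z_1-s)^2$ and $\eta'(s)=\hat z_2(\hat z_2+p)(\hat z_2+p+q)/(\hat z_2-s)^2$ with $\hat z_j(s)=s+s(s+p)(s+p+q)/y_j(s)$; the same localization of the critical points between the zeros of $A_s$ then translates into $\hat z_1\in(-p,0)$ and $\hat z_2\in(-p-q,-p)$, making the signs immediate. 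Without this (or an equivalent) device, (b) and (c) remain unproven; note also that, as you acknowledge, the limits $\beta(s)\to0$ and $\eta(s)\to0$ as $s\to\infty$ require the next-order terms in the expansions of $\zeta_1,\zeta_2$, since the leading terms in $2\zeta+b(s)-d(s)/\zeta^2$ cancel.
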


Figure \ref{figure_curves_gamma} gives an illustrative plot of the functions
$\beta(s)$, $\gamma(s)$ and $\eta(s)$.

\begin{figure}[t]
\centering
\begin{overpic}[width=5cm,height=5cm]{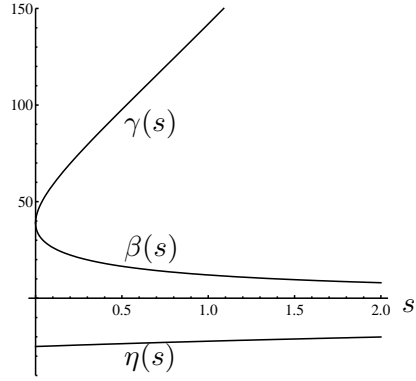}
\put(103,17){$s$} \put(30,3){$\eta(s)$} \put(30,32){$\beta(s)$}
\put(30,65){$\gamma(s)$}
\end{overpic}
\caption{\label{figure_curves_gamma}Graph of the curves $\beta(s)$,
$\gamma(s)$ and $\eta(s)$ with $p=3$ and $q=10$.}
\end{figure}

Now we come to the main result of this paper, i.e.,
the equilibrium problem for the vector of measures
$(\nu_1^\xi,\nu_2^\xi)$. The fact that $\Gamma_1(s)$ and $\Gamma_2(s)$ are increasing as $s$ increases,
induces two external fields $V_1(x)$ and $V_2(x)$ acting on $\nu_1^\xi$ and $\nu_2^\xi$, respectively.

\begin{theorem} \label{proposition_V(x)_nu_1}
For every $\xi > 0$, the vector of measures $(\nu^\xi_1,\nu^\xi_2)$
is the unique minimizer for the energy functional
\begin{align} \label{functionalwithV}
    \iint &\log \frac{1}{|x-y|} d\nu_1(x)d\nu_1(y) +
    \iint \log \frac{1}{|x-y|}d\nu_2(x)d\nu_2(y) \nonumber \\
    &-\iint \log \frac{1}{|x-y|} d\nu_1(x)d\nu_2(y)
    +\frac{1}{\xi} \int V_1(x) d\nu_1(x)+\frac{1}{\xi} \int V_2(x) d\nu_2(x),
\end{align}
over all vectors of measures $(\nu_1,\nu_2)$ such that
$\supp(\nu_1)\subset [0,\infty)$, $\int d\nu_1 = 1$ and
$\supp(\nu_2)\subset (-\infty,0]$, $\int d\nu_2 = 1/2$, where
\begin{align} \label{definition-of-V}
    V_1(x) = \int_0^{\infty} \log \left|\frac{z_1(x,s)}{z_2(x,s)} \right| \,
    ds \quad \textrm{and} \quad V_2(x) = \int_0^{\infty} \log \left|\frac{z_2(x,s)}{z_3(x,s)}
    \right| \, ds.
    \end{align}

The measures $\nu_1^{\xi}$ and $\nu_2^{\xi}$ are characterized by
the following variational conditions:
\begin{align}
\label{variational_nu_1} 2\int \log |x-y| d\nu^\xi_1(y) -\int \log
|x-y| d\nu^\xi_2(y) - \frac1\xi V_1(x)
    &\begin{cases} = \ell, \quad \text{for } x\in  \supp (\nu^\xi_1),  \\
    \leq \ell, \quad \text{for } x\in [0,\infty), \end{cases}
\end{align}
for some $\ell$, and
    \begin{align}
\label{variational_nu_2} 2\int \log |x-y| d\nu^\xi_2(y) -  \int \log
|x-y| d\nu^\xi_1(y) - \frac1\xi V_2(x) &
    \begin{cases} =0, \quad \text{for } x \in \operatorname{supp}(\nu^\xi_2), \\
    \leq 0,\quad \text{for } x \in (-\infty,0].
    \end{cases}
\end{align}
\end{theorem}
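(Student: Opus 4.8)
The plan is to obtain the equilibrium problem for $(\nu_1^\xi,\nu_2^\xi)$ by integrating the $s$-dependent variational conditions \eqref{variational_mu_1}--\eqref{variational_mu_2} of Theorem~\ref{equilibrium_problem_Toeplitz} against $ds$ over $(0,\xi)$ and then rescaling by $1/\xi$. Concretely, for a fixed target point $x$ I would use the definitions $\nu_j^\xi=\frac1\xi\int_0^\xi\mu_j^s\,ds$ to write, for any $x$,
\[
2\int\log|x-y|\,d\nu_1^\xi(y)-\int\log|x-y|\,d\nu_2^\xi(y)
=\frac1\xi\int_0^\xi\Bigl(2\int\log|x-y|\,d\mu_1^s(y)-\int\log|x-y|\,d\mu_2^s(y)\Bigr)ds,
\]
and similarly for the second combination, Fubini being justified by the logarithmic-potential bounds together with the compactness/decay of the supports $\Gamma_1(s)=[\beta(s),\gamma(s)]$ and $\Gamma_2(s)=(-\infty,\eta(s)]$. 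The point of the argument is that the inner bracket is \emph{not} simply $\ell^s$ or $0$ for all $x$: the conditions \eqref{variational_mu_1}--\eqref{variational_mu_2} hold with equality only on $\Gamma_j(s)$, and the monotonicity of the contours (Proposition~\ref{Proposition_Gamma_1_2_alpha}) is exactly what tells us for which $s$ the point $x$ lies in $\Gamma_j(s)$.

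The key computation is to identify the defect. For $x$ in the limiting support $\supp(\nu_1^\xi)$, monotonicity gives a threshold $s_1(x)\le\xi$ such that $x\in\Gamma_1(s)$ precisely for $s\ge s_1(x)$; for $s<s_1(x)$ the function $U^s(x):=2\int\log|x-y|d\mu_1^s(y)-\int\log|x-y|d\mu_2^s(y)$ is no longer the constant $\ell^s$, and its derivative in $x$ can be computed from the algebraic function via the Cauchy transforms of $\mu_1^s,\mu_2^s$. The standard fact (used implicitly around \eqref{measure_mu_for_recurrence}) is that $\int\log|x-y|\,d\mu_1^s(y)$ has $x$-derivative $\operatorname{Re}\frac{z_1'(x,s)}{z_1(x,s)}$ off $\Gamma_1(s)$ (and analogously the $\mu_2^s$-potential relates to $z_2,z_3$), so that
\[
\frac{d}{dx}U^s(x)=\operatorname{Re}\frac{d}{dx}\log\Bigl|\frac{z_1(x,s)}{z_2(x,s)}\Bigr|
\]
on the relevant region; integrating this in $x$ from a boundary point of $\Gamma_1(s)$ (where $U^s$ equals $\ell^s$ by continuity) produces exactly $\log|z_1(x,s)/z_2(x,s)|$ up to a constant. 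Integrating that over $s\in(0,s_1(x))$ and comparing with $V_1(x)=\int_0^\infty\log|z_1(x,s)/z_2(x,s)|\,ds$ — noting the integrand vanishes once $x\in\Gamma_1(s)$, i.e. for $s\ge s_1(x)$, since then $|z_1|=|z_2|$ — recovers the term $-\frac1\xi V_1(x)$ in \eqref{variational_nu_1}, and the constant reassembles as $\ell=\frac1\xi\int_0^\xi\ell^s\,ds$. The same bookkeeping with $s_2(x)$, using that $\Gamma_2(s)=(-\infty,\eta(s)]$ with $\eta$ increasing, yields \eqref{variational_nu_2} with right-hand side built from $\int_0^\xi 0\,ds=0$ and the external field $V_2$.

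Finally, to pass from the variational conditions to the assertion that $(\nu_1^\xi,\nu_2^\xi)$ is \emph{the unique minimizer} of \eqref{functionalwithV}, I would invoke the standard convexity argument for vector equilibrium problems: the interaction matrix $\begin{pmatrix}1&-1/2\\-1/2&1\end{pmatrix}$ is positive definite, so the energy functional \eqref{functionalwithV} is strictly convex on the (convex) admissible set of pairs with the prescribed masses and supports; hence a minimizer is unique, and the Euler--Lagrange conditions \eqref{variational_nu_1}--\eqref{variational_nu_2} are both necessary and sufficient to single it out. Thus it remains only to check that $\nu_1^\xi,\nu_2^\xi$ have finite logarithmic energy and that the external fields $V_1,V_2$ are admissible (finite, lower semicontinuous, with $V_1(x)\to\infty$ fast enough as $x\to\infty$ to confine $\nu_1^\xi$, and $V_2$ likewise confining $\nu_2^\xi$ near $-\infty$); these follow from the explicit behavior of $z_1,z_2,z_3$ as $x\to\infty$ and as $x\to-\infty$, together with Proposition~\ref{Proposition_Gamma_1_2_alpha}.

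I expect the main obstacle to be the rigorous handling of the $x$-region where $x$ lies \emph{outside} $\Gamma_j(s)$: one must be careful that the "potential minus constant" equals $\log|z_1/z_2|$ with the \emph{correct} branch and sign (i.e.\ that the difference is nonpositive, matching the inequality in \eqref{variational_nu_1}), that no extra contributions from the unbounded contour $\Gamma_2(s)$ or from the behavior at $x=0$ spoil the Fubini interchange, and that the endpoint constants telescope correctly as $s$ varies. Making the labeling $|z_1|\ge|z_2|\ge|z_3|$ consistent across the whole range of $s$ — so that $V_1,V_2\ge 0$ and the thresholds $s_1(x),s_2(x)$ are well defined — is where the monotonicity of $\beta,\gamma,\eta$ does the real work.
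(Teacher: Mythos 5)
Your proposal is correct and follows essentially the same route as the paper: integrate the $s$-dependent variational conditions over $s\in(0,\xi)$, use the monotonicity of $\Gamma_1(s)$, $\Gamma_2(s)$ to identify $\supp(\nu_j^\xi)$ and the threshold value of $s$ for each $x$, get the inequality off the support from $\log|z_1/z_2|\ge 0$ (resp.\ $\log|z_2/z_3|\ge 0$) and equality on the support since the integrand vanishes for $s\ge\xi$, then conclude by the standard uniqueness/sufficiency theory for vector equilibrium problems. The only real difference is that the paper takes the off-contour identities \eqref{variational_mu_1_log}--\eqref{variational_mu_2_log}, valid for all $x\in\mathbb{C}$, directly from the proof of Theorem 2.3 of \cite{DK1}, which makes your differentiate-in-$x$-and-integrate-from-the-edge bookkeeping (and the branch/sign and Fubini concerns you flag) unnecessary.
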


Since the usual equilibrium problems provide a powerful tool in the
asymptotic study of orthogonal polynomials (cf.
\cite{DKMVZ992,DKMVZ991,ST}), we hope the vector equilibrium problem
stated above will be helpful in further investigation of the
asymptotics of $P_{k}$ in \eqref{Pn}; see also \cite{DK2,KMW} for a
recent applications of vector equilibrium problems in some random
models.

Finally, we give the explicit formulas of the external fields $V_1$
and $V_2$ defined in \eqref{definition-of-V}, and the densities of
the measures $\nu_1^\xi$, $\nu_2^\xi$ given in
\eqref{nu1xi-as-integral} and \eqref{nu2xi-as-integral},
respectively.
\begin{theorem} \label{theorem:external_field}
For every $p,q > 0$, we have
\begin{multline}
\label{external_field}
    V_1(x) = \sqrt{q^2+4x}-p\log(4x)-q\log(\sqrt{q^2+4x}+q)
    \\ -2p-q+p\log(4p^2+4pq)+q\log(2p+2q),
\end{multline}
and
\begin{equation}
\label{external_field V2}
V_2(x) = \left\{
\begin{array}{ll}
0, & \hbox{for $x<-q^2/4$,} \\
-2\sqrt{q^2+4x}+
q\log\Big(\frac{q+\sqrt{q^2+4x}}{q-\sqrt{q^2+4x}}\Big), & \hbox{for
$-q^2/4 \leq x <0$.}
\end{array}
\right.
\end{equation}
The densities of the measures
$\nu_1^\xi$, $\nu_2^\xi$ are given by
\begin{align}
\label{eq:density_nu1_formula}
\frac{d\nu_1^\xi}{dx}(x)&=c\frac{(z_{1+}(x,s)-z_{1-}(x,s))}
{z_{1+}(x,s)z_{1-}(x,s)}, \quad for \, x\in\supp(\nu_1^\xi),\\
\label{eq:density_nu2_formula}
\frac{d\nu_2^\xi}{dx}(x)&=\begin{cases}
c\frac{(z_{2+}(x,s)-z_{2-}(x,s))}{z_{2+}(x,s)z_{2-}(x,s)x},
\quad & for \, x\in \supp(\nu_2^\xi) ~and~ -q^2/4<x<0, \\
c \frac{(z_{2+}(x,s)-z_{2-}(x,s))}{z_{2+}(x,s)z_{2-}(x,s)x} +
\frac{\sqrt{|q^2+4x|}}{2\pi x }, \quad & for \, x\in
\supp(\nu_2^\xi) ~and~ x\leq-q^2/4,
\end{cases}
\end{align}
where
$$c=\frac{\xi(\xi+p)(\xi+p+q)}{2 \pi i}.$$
\end{theorem}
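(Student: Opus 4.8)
The plan is to prove Theorem~\ref{theorem:external_field} in two parts: first the explicit evaluation of the external fields $V_1$ and $V_2$, and then the density formulas for $\nu_1^\xi$ and $\nu_2^\xi$. For the external fields, recall from \eqref{definition-of-V} that $V_1(x)=\int_0^\infty \log|z_1(x,s)/z_2(x,s)|\,ds$. The integrand is supported only on those $s$ for which $x\in\Gamma_1(s)$, i.e.\ for which $|z_1(x,s)|=|z_2(x,s)|$; for all other $s$ the ratio $|z_1/z_2|>1$ so the integrand vanishes\ldots\ no: more carefully, the integrand $\log|z_1/z_2|$ vanishes \emph{only} when $|z_1|=|z_2|$, so in fact the integral runs over the $s$-interval where $x\notin\Gamma_1(s)$. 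Using Proposition~\ref{Proposition_Gamma_1_2_alpha}(a)--(b), for fixed $x>0$ the set of $s$ with $x\in[\beta(s),\gamma(s)]$ is an interval $[s_-(x),s_+(x)]$ (or a half-line), since $\gamma$ increases from $p(p+q)$ and $\beta$ decreases from $p(p+q)$. First I would differentiate $V_1$ with respect to $x$ and observe that $\frac{d}{dx}\int_0^\infty\log|z_1(x,s)/z_2(x,s)|\,ds$ can be computed by differentiating under the integral sign, where the boundary terms from the $s$-dependent endpoints cancel because on $\Gamma_1(s)$ the branches $z_1$ and $z_2$ have equal modulus. This reduces $V_1'(x)$ to an integral of $\partial_x\log z_1-\partial_x\log z_2$ over an explicit $s$-range, which can be evaluated using the algebraic equation $A_s(z)=x$ and the explicit form \eqref{scaling_limits2} of $b(s),c(s),d(s)$.

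A cleaner route, which I would actually pursue, is to use the variational characterization directly. Comparing the variational conditions \eqref{variational_nu_1}--\eqref{variational_nu_2} of Theorem~\ref{proposition_V(x)_nu_1} with the integrated forms of \eqref{variational_mu_1}--\eqref{variational_mu_2}: integrating \eqref{variational_mu_1} over $s\in(0,\xi)$ and dividing by $\xi$ gives $2\int\log|x-y|\,d\nu_1^\xi(y)-\int\log|x-y|\,d\nu_2^\xi(y)=\frac1\xi\int_0^\xi\ell^s\,ds$ for $x\in\Gamma_1(\xi)$ (using monotonicity so that $x$ lies in $\Gamma_1(s)$ for all $s$ in a terminal subinterval, with the ``missing'' part of the $s$-integral producing exactly $\frac1\xi V_1(x)$ by definition \eqref{definition-of-V}). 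Matching term-by-term identifies $V_1$ as the accumulated defect $\int_0^\infty\log|z_1/z_2|\,ds$ already, so for the \emph{explicit} formula I would instead directly integrate. The key computation: for fixed $x$, parametrize by the algebraic curve; one finds that $\frac{\partial}{\partial s}\log|z_j(x,s)|$ has a clean expression because $z_j$ solves $z+b(s)+c(s)/z+d(s)/z^2=x$ with $b,c,d$ polynomial in $s$ of the factored form in \eqref{scaling_limits2}. I expect the substitution $w=\sqrt{q^2+4x}$ to linearize things, matching the appearance of $\sqrt{q^2+4x}$ in \eqref{external_field}. The constants $-2p-q+p\log(4p^2+4pq)+q\log(2p+2q)$ are fixed by the boundary behavior as $x\to\infty$, where $V_1(x)\to 0$ should be checked against the large-$x$ asymptotics of $z_1/z_2$, or alternatively by evaluating at $x=p(p+q)$ where the $\Gamma_1(s)$ interval first opens.

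For $V_2$, the same strategy applies with $\Gamma_2(s)=(-\infty,\eta(s)]$ and Proposition~\ref{Proposition_Gamma_1_2_alpha}(c): for $x<-q^2/4$, one has $x<\eta(s)$ for \emph{all} $s>0$ (since $\eta(s)\nearrow 0$ starts at $-q^2/4$), so $|z_2(x,s)|=|z_3(x,s)|$ throughout and $V_2(x)=0$ identically; for $-q^2/4\le x<0$ the defect is nonzero only on the $s$-interval $(0,s_*(x))$ where $\eta(s)<x$, and integrating the log-derivative there (again using the explicit $c(s),d(s)$) produces $-2\sqrt{q^2+4x}+q\log\!\big(\tfrac{q+\sqrt{q^2+4x}}{q-\sqrt{q^2+4x}}\big)$, with the constant fixed by continuity at $x=-q^2/4$ where $\sqrt{q^2+4x}=0$ and $V_2$ must vanish.

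Finally, for the density formulas: differentiating the integral representation \eqref{nu1xi-as-integral}, $\nu_1^\xi=\frac1\xi\int_0^\xi\mu_1^s\,ds$, with $d\mu_1^s$ given by \eqref{measure_mu_for_recurrence}, one must compute $\frac{d}{dx}$ applied to $\frac1\xi\int_0^\xi(\ldots)\,ds$. The point is that for $x\in\supp(\nu_1^\xi)$, only $s$ near $\xi$ contributes the boundary-value jump (because for $s$ small $x\notin\Gamma_1(s)$ and the integrand is analytic with no jump), so by a Leibniz-type argument the density of $\nu_1^\xi$ at $x$ equals $\frac1\xi$ times a contribution from $s=\xi$ (the endpoint) plus an integral of $s$-derivatives that telescopes. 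Using $A_\xi(z_{1\pm})=x$ and eliminating via the algebraic equation, $\frac{z_{1+}'}{z_{1+}}-\frac{z_{1-}'}{z_{1-}}$ should collapse to $c(\xi)\frac{z_{1+}-z_{1-}}{z_{1+}z_{1-}}$ up to the factor $\frac{1}{2\pi i}$, which is exactly the claimed $c=\frac{\xi(\xi+p)(\xi+p+q)}{2\pi i}$ in front (note $c(\xi)=\xi(\xi+p)(\xi+p+q)(3\xi+2p+q)$ has an extra factor, so one needs the precise bookkeeping of how the three branches' logarithmic derivatives relate via $\sum z_j'/z_j$ and $\sum z_j=$ const, etc.). For $\nu_2^\xi$ the extra term $\frac{\sqrt{|q^2+4x|}}{2\pi x}$ for $x\le -q^2/4$ arises precisely from the region where $V_2$ is identically zero contributing its own nontrivial density piece, or more likely from the branch-point structure at $x=-q^2/4$ where the nature of the solutions $z_j$ changes. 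I would use the identity $z_1 z_2 z_3 = -d(s)$ and $z_1+z_2+z_3 = x - b(s)$ (from Vieta applied to $z^3-(x-b)z^2+cz+d=0$) to reduce all the logarithmic derivatives to rational functions of a single branch. The main obstacle will be the careful handling of the moving endpoints in the $s$-integration: one must justify differentiation under the integral sign and show the endpoint contributions combine correctly with the $s$-derivative terms — this is exactly the ``main difficulty'' flagged in the text after Theorem~\ref{equilibrium_problem_Toeplitz}, namely that $\Gamma_1(s)$ and $\Gamma_2(s)$ vary with $s$, and it is resolved by exploiting the monotonicity established in Proposition~\ref{Proposition_Gamma_1_2_alpha}.
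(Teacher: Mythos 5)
Your overall skeleton matches the paper's: restrict the integral defining $V_j$ to $s\in[0,s^*(x)]$ using the monotonicity from Proposition \ref{Proposition_Gamma_1_2_alpha} (and you correctly catch that the integrand vanishes \emph{on} $\Gamma_j(s)$, i.e.\ for $s\ge s^*(x)$), differentiate in $x$, and fix constants at the special points $x_0=p(p+q)$ and $\widetilde x_0=-q^2/4$; the densities are likewise treated by differentiating the $s$-integral representations. But the proof stalls exactly where the paper's real work lies: you never say how the integral $\int_0^{s^*(x)}\bigl(\tfrac{1}{z_1}\partial_x z_1-\tfrac{1}{z_2}\partial_x z_2\bigr)ds$ is actually evaluated, given that the $z_j(x,s)$ are branches of a cubic with no usable closed form. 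Phrases like ``can be evaluated using the algebraic equation and the explicit form of $b(s),c(s),d(s)$'' and ``I expect the substitution $w=\sqrt{q^2+4x}$ to linearize things'' are hopes, not an argument. The paper's device is the rational change of variables $\widetilde z_j(x,s)=\frac{s(s+p)(s+p+q)}{z_j(x,s)}+s$, under which the $z_j$ become the roots of $B(z,s)=\frac{z(z+p)(z+p+q)}{z-s}=x$ and the integrand becomes an exact $s$-derivative, $\frac{1}{z_j}\partial_x z_j=\frac1x\,\partial_s\widetilde z_j$; the $s$-integral then collapses by the fundamental theorem of calculus to boundary terms, the term at $s=s^*(x)$ vanishes because $z_1=z_2$ (double root) there, and the $s\to0+$ term is computed from the explicit small-$s$ asymptotics of the roots (Lemma \ref{asymptotic_solutions_alpha}). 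This is the missing idea, and its absence shows in your own text: you cannot account for the constant $c=\frac{\xi(\xi+p)(\xi+p+q)}{2\pi i}$ (you note the mismatch with $c(\xi)=\xi(\xi+p)(\xi+p+q)(3\xi+2p+q)$ but leave it as ``bookkeeping''), whereas in the paper it falls out immediately from $\widetilde z_{1-}-\widetilde z_{1+}=\xi(\xi+p)(\xi+p+q)\frac{z_{1+}-z_{1-}}{z_{1+}z_{1-}}$ at $s=\xi$; and you can only guess at the origin of the extra term $\frac{\sqrt{|q^2+4x|}}{2\pi x}$ for $x\le-q^2/4$, which in the paper is precisely the non-vanishing $s\to0+$ boundary term that appears when $s^*(x)=0$ (so there is no double-root cancellation at the lower endpoint).

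Two further points. Your primary suggestion for fixing the constant in $V_1$ — imposing $V_1(x)\to0$ as $x\to\infty$ — is wrong: $s^*(x)\to\infty$ and $V_1$ grows like $\sqrt{q^2+4x}$ at infinity; only your alternative (normalizing $V_1(p(p+q))=0$, where $s^*=0$) works, and it is what the paper uses, together with the analogous continuity condition $V_2(-q^2/4)=0$. Also, for fixed $x>0$ the set $\{s: x\in[\beta(s),\gamma(s)]\}$ is always a half-line $[s^*(x),\infty)$, never a bounded interval, by the monotonicity and the limits in Proposition \ref{Proposition_Gamma_1_2_alpha}; your hedged ``interval $[s_-(x),s_+(x)]$'' should be discarded, since the half-line structure is what makes \eqref{eq:finiteintegralV1} and the equality case in the variational inequality work.
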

Figure \ref{figure_densities} shows the graph of the densities of
$\nu_1^\xi$ and $\nu_2^\xi$.
\begin{figure}[t]
\centering
\begin{overpic}[width=12cm,height=6cm]{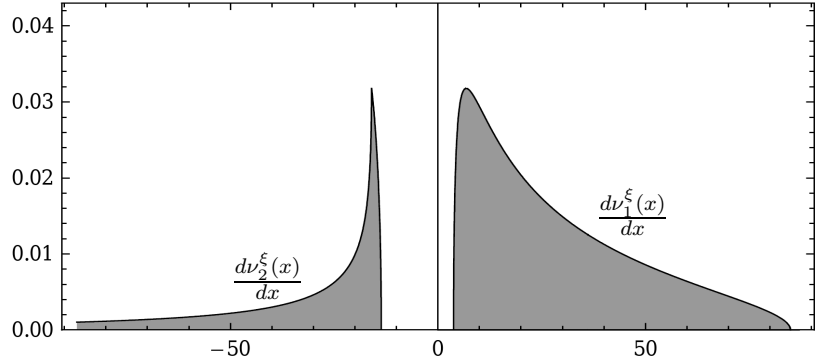}
\put(30,13){$\frac{d\nu_2^\xi(x)}{dx}$}
\put(70,20){$\frac{d\nu_1^\xi(x)}{dx}$}
\end{overpic}
\caption{\label{figure_densities} The densities of the measures
$\nu_1^\xi$ and $\nu_2^\xi$. ($\xi$=1, $p$=1.7, $q$=8)}
\end{figure}

\begin{remark}
If we take $p,q\to 0$, which corresponds to fixed parameters
$\alpha$ and $\nu$, the symbol \eqref{family_of_symbols0} becomes
$$A_s(z)=z+3s^2+\frac{3s^4}{z}+\frac{s^6}{z^2}.$$
An straightforward calculation using \eqref{eq:density_nu1_formula}
gives
$$\frac{d\nu_1^\xi}{dx}(x)=\begin{cases}
\tfrac{4}{27\xi^2}h(\tfrac{4x}{27\xi^2}), & x\in(0,\tfrac{27\xi^2}{4}), \\
0, &elsewhere,\end{cases}$$ with
$$h(y)=\frac{3\sqrt{3}}{4\pi}
\frac{(1+\sqrt{1-y})^{1/3}-(1-\sqrt{1-y})^{1/3}}{y^{2/3}},$$ which
agrees with Theorem 2.7 in \cite{CCVA}. This case is illustrated in
Figure \ref{figure_pq0}.
\end{remark}

\begin{figure}[t]
\centering
\begin{overpic}[width=12cm,height=6cm]{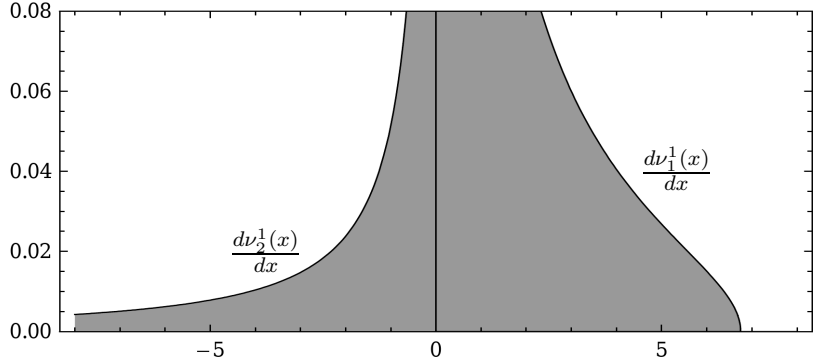}
\put(30,16){$\frac{d\nu_2^1(x)}{dx}$}
\put(75,25){$\frac{d\nu_1^1(x)}{dx}$}
\end{overpic}
\caption{\label{figure_pq0}The densities of the measures
$\nu_1^1$ and $\nu_2^1$ in the case $p=q=0$.}
\end{figure}

The rest of this paper is organized as follows. We first prove
Proposition \ref{Proposition_Gamma_1_2_alpha} in Section
\ref{sec:proof_propositions}. The proof of
Theorem \ref{proposition_V(x)_nu_1} is given in Section
\ref{sec:proof_theorem_Var_conditions}. We conclude this paper with
the proof of Theorem \ref{theorem:external_field}, where we use a
nonlinear transformation to evaluate the integrals used to define
the external fields and the equilibrium vector.

\section{Proof of Proposition \ref{Proposition_Gamma_1_2_alpha}}
\label{sec:proof_propositions}

The symbol \eqref{family_of_symbols0} with the functions $b(s)$,
$c(s)$ and $d(s)$ from \eqref{scaling_limits2} allows for a
factorization
\begin{align} \label{eq:factorization_As}
    A_s(z) = \frac{(z+s(s+p))(z+s(s+p+q))(z+(s+p)(s+p+q))}{z^2}.
\end{align}
By \eqref{eq:factorization_As}, it follows that $A_s$ has three negative
simple zeros $ r_1, r_2, r_3$. We order them so that
\[ r_1 < r_2 < r_3 < 0; \]
see Figure \ref{Plot_symbol} for the graph of $A_s(z)$.

The derivative of $A_s(z)$,
$$A'_s(z) = 1 - c(s) z^{-2} - 2 d(s) z^{-3}, $$
has three roots in the complex plane. From Figure~\ref{Plot_symbol},
we see that all zeros of $A_s'$ are real. We denote the zeros of
$A'(z)$ by $y_1$, $y_2$ and $y_3$ so that
\begin{equation}\label{y_i}
 y_1 < y_2 < 0< y_3,
\end{equation}
as indicated in Figure \ref{Plot_symbol}. To emphasize the
dependence on $s$, we also write $y_1(s)$, $y_2(s)$ and $y_3(s)$.

\begin{figure}[h]
\begin{center}
\includegraphics[scale=1]{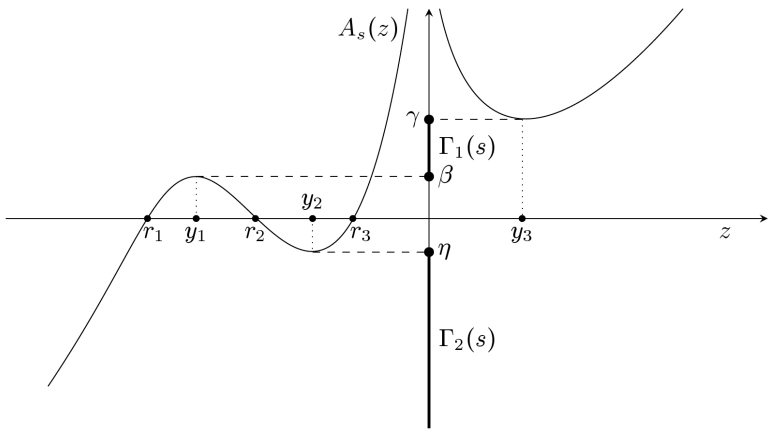}
\caption{\label{Plot_symbol} The graph of $A_s(z)$
\eqref{sympbolsAs}, $z\in \mathbb{R}$.}
\end{center}
\end{figure}

Before proving Proposition \ref{Proposition_Gamma_1_2_alpha} we
first need two lemmas. The fact that all the zeros $r_j$ of the
symbol $A_s$ are strictly negative plays a key role in these
proofs.

\begin{lem} \label{lem:x_in_R}
Assume that $z_1, z_2\in \mathbb{C}$ are such that $z_1\neq z_2$,
$|z_1|=|z_2|$ and $A_s(z_1)=A_s(z_2)=x$. Then $z_1=\bar z_2$ and
$x\in \mathbb{R}$.
\end{lem}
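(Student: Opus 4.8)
The plan is to exploit the factorization \eqref{eq:factorization_As}, which shows that $z^2 A_s(z) = (z-r_1)(z-r_2)(z-r_3)$ is a real polynomial whose three roots $r_1 < r_2 < r_3 < 0$ are all negative. Suppose $z_1 \neq z_2$ with $|z_1| = |z_2| =: \rho$ and $A_s(z_1) = A_s(z_2) = x$. Writing the equation $A_s(z) = x$ in polynomial form, $z^3 + (b(s)-x)z^2 + c(s)z + d(s) = 0$, we see $z_1$ and $z_2$ are two of the three roots of this cubic; call the third root $z_3$. By Vieta, $z_1 z_2 z_3 = -d(s) = -s^2(s+p)^2(s+p+q)^2 < 0$ for $s > 0$, so $|z_1 z_2 z_3| = \rho^2 |z_3| = d(s)$; in particular all three roots are nonzero.

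The key device is to compare $A_s$ on the circle $|z| = \rho$. Since $A_s(z_1) = A_s(z_2)$, and $A_s(z) - x = (z-z_1)(z-z_2)(z-z_3)/z^2$, the main step is to rule out the case that $z_1, z_2$ are not complex conjugates. First I would dispose of the possibility that $z_1$ or $z_2$ is real: if say $z_1 \in \er$, then $z_1 < 0$ (one checks $A_s(z) = x$ with $z \geq 0$ forces... actually more carefully, since $z^2 A_s(z) - x z^2 = (z-r_1)(z-r_2)(z-r_3) - x z^2$, and for $z>0$ real the product $(z-r_1)(z-r_2)(z-r_3) > 0$ while we need to track sign of $x$); and then $z_2$ with $|z_2| = |z_1|$ but $z_2 \neq z_1$ would have to be $-z_1$ or genuinely complex — here one uses that the cubic has real coefficients, so non-real roots come in conjugate pairs, which quickly forces $z_2 = \bar z_1$, contradicting $z_1 \in \er$ unless $z_2 = z_1$. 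So the genuine case is $z_1, z_2$ non-real. Then again by the real-coefficient property, the conjugate pair among $\{z_1, z_2, z_3\}$ must be $\{z_1, \bar z_1\}$ (the third root $z_3$ being forced real, or else we would have two conjugate pairs among three roots); and $|z_2| = |z_1| = |\bar z_1|$ together with $z_2 \in \{\bar z_1, z_3\}$ and $z_2 \neq z_1$ gives $z_2 = \bar z_1$. Consequently $x = A_s(z_1) = \overline{A_s(\bar z_1)} = \overline{A_s(z_2)} = \bar x$, so $x \in \er$.

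The one remaining gap in the above is the assertion that $\{z_1,z_2,z_3\}$ cannot consist of $z_1$ non-real together with $z_2 = z_3$ both equal to some other non-real number, or more precisely the scenario where the conjugation pairing does \emph{not} match $z_1$ with $z_2$. I would handle this by a direct argument: a real cubic with a non-real root $z_1$ has $\bar z_1$ as a root, so its roots are exactly $\{z_1, \bar z_1, w\}$ with $w \in \er$. Since $z_2$ is one of these and $z_2 \neq z_1$, either $z_2 = \bar z_1$ (done, as above) or $z_2 = w \in \er$ — but then $|z_2| = |w|$ must equal $|z_1| = |\bar z_1|$, and now I invoke the factorization once more: $w$ is real and $|w| = |z_1|$, so I would need to show this is impossible given $r_1 < r_2 < r_3 < 0$. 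This is where the negativity of all the $r_j$ is essential: using $z_1 \bar z_1 w = -d(s)$ one gets $|z_1|^2 = -d(s)/w = d(s)/|w| = d(s)/|z_1|$, hence $|z_1|^3 = d(s)$, i.e. $|z_1| = d(s)^{1/3}$ and $w = -d(s)^{1/3}$; plugging $w = -d(s)^{1/3}$ into the cubic and checking against the factorization $A_s(w) = (w+s(s+p))(w+s(s+p+q))(w+(s+p)(s+p+q))/w^2$ leads to a contradiction with $A_s(w) = x = A_s(z_1) = \overline{A_s(z_1)}$ real forcing... — here a short computation shows it cannot hold for $s,p,q>0$. I expect \textbf{this last sub-case (a real root and a non-real root sharing the same modulus)} to be the main obstacle; the cleanest route is probably to observe that on $\er_{<0}$ the map $t \mapsto A_s(t)$ restricted between consecutive critical points $y_1, y_2$ is monotone, so $A_s(w) = A_s(z_1)$ with $w$ real and $z_1$ non-real would need $z_1$ to lie off the real axis yet share a value with a point where $A_s$ is locally injective on $\er$, and then a winding-number / argument-principle count of solutions of $A_s(z) = x$ inside $|z| < \rho$ (as in Schmidt–Spitzer type arguments) rules it out. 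I would write the argument principle version if the elementary Vieta bookkeeping gets unwieldy.
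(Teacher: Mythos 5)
Your argument has a circularity at its central step. You treat $z_1,z_2$ as roots of the cubic $z^3+(b(s)-x)z^2+c(s)z+d(s)$ and repeatedly invoke ``the real-coefficient property'' to conclude that non-real roots come in conjugate pairs. But the coefficient of $z^2$ is $b(s)-x$, and $x$ is an arbitrary complex number in the hypothesis of the lemma: its reality is precisely the conclusion you are asked to prove. So in your main case ($z_1,z_2$ both non-real) the conjugate-pair structure is simply not available, and the pairing $z_2=\bar z_1$ is in effect assumed rather than derived. In the sub-cases where one of $z_1,z_2$ is real the coefficients are indeed real (since then $x=A_s(z_j)\in\mathbb{R}$), but those sub-cases reduce to exactly the configuration you flag as the ``main obstacle'' -- a real root and a non-real root of equal modulus -- which you do not actually settle: you assert that ``a short computation shows it cannot hold'' and then hedge toward an argument-principle count without carrying out either. (For the record, that sub-case can be closed by your Vieta bookkeeping: with roots $z_1,\bar z_1,w$, $w$ real, $|w|=|z_1|=\rho$, one gets $\rho^3=d(s)$, $w=-\rho$, and the second symmetric function gives $2\rho\,|\Re z_1|=c(s)-\rho^2>2\rho^2$ because $c(s)>3d(s)^{2/3}$ by AM--GM applied to $s$, $s+p$, $s+p+q$ with $p,q>0$; this contradicts $|\Re z_1|\leq\rho$. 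But completing this sub-case does not repair the non-real/non-real case, where $x$ may a priori be non-real.)

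What is missing is an argument valid for a priori complex $x$, and this is where the negativity of the zeros $r_1,r_2,r_3$ is used in the paper. Writing $z_1=\rho e^{i\theta_1}$, $z_2=\rho e^{i\theta_2}$, the factorization \eqref{eq:factorization_As} gives $|A_s(\rho e^{i\theta})|=\rho^{-2}\prod_{j=1}^{3}|\rho e^{i\theta}-r_j|$ with $|\rho e^{i\theta}-r_j|^2=\rho^2+r_j^2-2\rho r_j\cos\theta$; since every $r_j<0$, each factor is an even function of $\theta$ that is strictly decreasing as $\theta$ increases from $0$ to $\pi$. Hence the single equality of moduli $|A_s(z_1)|=|A_s(z_2)|\,(=|x|)$ already forces $\theta_2=-\theta_1$, i.e.\ $z_2=\bar z_1$, and only then does one conclude $x=A_s(z_1)=\overline{A_s(\bar z_1)}=\overline{A_s(z_2)}=\bar x$. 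This comparison of $|A_s|$ along the circle $|z|=\rho$ presupposes nothing about $x$, whereas your route needs $x\in\mathbb{R}$ before it can start; the logical order in your proposal is reversed, and as written the proof does not go through.
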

\begin{proof}
This lemma is essentially Lemma 4.1 in \cite{KR}. For the
convenience of the readers, we repeat the proof here.

Since $z_1\neq z_2$ and $|z_1|=|z_2|$, we may assume $z_1=\rho
e^{i\theta_1} $, $z_2=\rho e^{i\theta_2} $ with $\rho>0$ and
$\theta_1\neq\theta_2$, $\theta_{1,2}\in[-\pi,\pi]$. Since the zeros
$r_j$ of $A_s$ are strictly negative, it is easily seen that
$f(\theta):=|A_s(\rho e^{i \theta})|$ is an even function on
$[-\pi,\pi]$, which is strictly decreasing as $\theta$ increases
from $0$ to $\pi$. Hence, the equality
\[  | A_s(\rho e^{i \theta_1})| = | A_s(\rho e^{i \theta_2})| \]
holds if and only if $\theta_2 = - \theta_1$. It then follows that
$z_1 = \bar{z}_2$, and
 \[ x=A_s(z_1)=\overline{A_s(\bar z_1)}=\overline{A_s(z_2)}=\overline{x}, \]
 so that $x\in \mathbb{R}$.
\end{proof}

\begin{lem}
\label{lem:Gammas_real} For each $s>0$, we have
$\Gamma_1(s)\cup\Gamma_2(s)\subset \mathbb{R}$ and
$\Gamma_1(s)\cap\Gamma_2(s)=\emptyset$.
\end{lem}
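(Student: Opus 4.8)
The plan is to prove Lemma~\ref{lem:Gammas_real} in two parts. For the first assertion, $\Gamma_1(s)\cup\Gamma_2(s)\subset\mathbb{R}$, observe that a point $x$ lies in $\Gamma_1(s)$ precisely when $|z_1(x,s)|=|z_2(x,s)|$, and in $\Gamma_2(s)$ precisely when $|z_2(x,s)|=|z_3(x,s)|$; in either case there are two distinct solutions $z,z'$ of $A_s(z)=A_s(z')=x$ with $|z|=|z'|$. (The solutions are distinct because $A_s$ has three negative simple zeros, hence no repeated root sharing modulus can occur at generic $x$; more carefully, along $\Gamma_1$ the outer two collide and along $\Gamma_2$ the inner two collide, but the two colliding branches are never literally equal on the contours, only their moduli agree.) Applying Lemma~\ref{lem:x_in_R} with this pair $(z,z')$ immediately forces $x\in\mathbb{R}$. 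Thus both contours lie on the real line.

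For the second assertion, $\Gamma_1(s)\cap\Gamma_2(s)=\emptyset$, I would argue by contradiction. Suppose $x\in\Gamma_1(s)\cap\Gamma_2(s)$. Then $|z_1(x,s)|=|z_2(x,s)|=|z_3(x,s)|$, so all three roots of $A_s(z)=x$ share a common modulus $\rho>0$. By Lemma~\ref{lem:x_in_R}, $x\in\mathbb{R}$ and the roots come in conjugate pairs; but there are exactly three of them, so at least one, say $z_j$, is real with $|z_j|=\rho$, meaning $z_j=\pm\rho$. If the other two are a genuine conjugate pair $\rho e^{\pm i\theta}$ with $\theta\neq 0,\pi$, then by Vieta's formulas applied to the cubic $z^3 + (b(s)-x)z^2 + c(s)z + d(s)=0$ (obtained by clearing denominators in $A_s(z)=x$), the product of the roots is $-d(s)$, which equals $(\pm\rho)\cdot\rho^2 = \pm\rho^3$; since $d(s)=s^2(s+p)^2(s+p+q)^2>0$ we get $z_j=-\rho$ and $\rho^3=d(s)$. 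Now I would use the function $f(\theta)=|A_s(\rho e^{i\theta})|$ from the proof of Lemma~\ref{lem:x_in_R}: it is strictly decreasing on $[0,\pi]$, so $|A_s(\rho e^{i\theta})|=|A_s(-\rho)|=|A_s(\rho e^{i\pi})|$ with $\theta\in(0,\pi)$ is impossible unless $\theta=\pi$. That contradicts the roots being a nonreal conjugate pair. The remaining case is that all three roots are real with common modulus $\rho$, which on a cubic forces two of them to coincide (only values $\pm\rho$ available) — contradicting that $A_s(z)=x$ has three \emph{distinct} solutions for $x$ on the relevant contours (equivalently, $A_s'$ has no real double root coinciding with a root of $A_s(z)-x$ at such $x$; this follows from the description of $y_1<y_2<0<y_3$ and the shape of the graph of $A_s$ in Figure~\ref{Plot_symbol}).

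The main obstacle I anticipate is handling the degenerate configurations cleanly: ensuring that along $\Gamma_1(s)$ and $\Gamma_2(s)$ the relevant pair of branches is genuinely distinct (so that Lemma~\ref{lem:x_in_R} applies), and ruling out the case of three real roots of equal modulus. Both are controlled by the key structural fact, emphasized in the excerpt, that all zeros $r_1<r_2<r_3<0$ of $A_s$ are strictly negative and simple, together with the monotonicity of $f(\theta)=|A_s(\rho e^{i\theta})|$ on $[0,\pi]$; I would lean on the graph of $A_s$ (Figure~\ref{Plot_symbol}) and the ordering \eqref{y_i} of the critical points $y_1<y_2<0<y_3$ to pin down exactly where collisions of branches can occur, namely that the collision producing $\Gamma_1(s)$ happens near $x=A_s(y_3)>0$ and the one producing $\Gamma_2(s)$ happens near $x=A_s(y_1)$ or $A_s(y_2)<0$, so the two contours are separated by the sign of $x$. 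In fact this sign separation alone — $\Gamma_1(s)\subset(0,\infty)$ and $\Gamma_2(s)\subset(-\infty,0)$, which is part of Proposition~\ref{Proposition_Gamma_1_2_alpha} — gives disjointness once we know both are real, so an alternative streamlined route is to first establish the real-line containment via Lemma~\ref{lem:x_in_R} and then invoke the sign information; I would present the self-contained modulus argument above as the primary proof since Proposition~\ref{Proposition_Gamma_1_2_alpha} is proved using this lemma.
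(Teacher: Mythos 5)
Your proof of the inclusion $\Gamma_1(s)\cup\Gamma_2(s)\subset\mathbb{R}$ rests on the parenthetical claim that along the contours the two colliding branches ``are never literally equal, only their moduli agree,'' and that claim is false. At the endpoints $x=\beta(s),\gamma(s)$ of $\Gamma_1(s)$ and $x=\eta(s)$ of $\Gamma_2(s)$ the polynomial $A_s(z)-x$ has a genuine double root, located at a critical point of $A_s$, so there $z_1(x,s)=z_2(x,s)$ (respectively $z_2(x,s)=z_3(x,s)$) exactly; these points do belong to the contours (indeed the proof of Proposition~\ref{Proposition_Gamma_1_2_alpha} uses $\beta(s),\gamma(s)\in\Gamma_1(s)$ and $\eta(s)\in\Gamma_2(s)$). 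Lemma~\ref{lem:x_in_R} requires two \emph{distinct} roots of equal modulus, so it cannot be invoked at such $x$, and your argument leaves them uncovered. The paper's proof splits into exactly these two cases: if $A_s(z)-x$ has a double root, then $A_s'$ vanishes there, and since all zeros of $A_s'$ are real (the critical points $y_1<y_2<0<y_3$ of \eqref{y_i}), the double root is real and $x=A_s(y_j)\in\mathbb{R}$; only in the simple-root case is Lemma~\ref{lem:x_in_R} used. You need this (one-line) double-root case, or some substitute closure argument; as written the step fails precisely where the distinctness hypothesis of Lemma~\ref{lem:x_in_R} breaks down.

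For the disjointness $\Gamma_1(s)\cap\Gamma_2(s)=\emptyset$, your main case (one real root together with a nonreal conjugate pair of the same modulus) is correct but more roundabout than needed: the Vieta computation and the re-derivation of the monotonicity of $f(\theta)=|A_s(\rho e^{i\theta})|$ can be replaced by applying Lemma~\ref{lem:x_in_R} directly to the real root and one member of the conjugate pair (distinct, equal modulus, same image under $A_s$), which forces them to be complex conjugates of each other, a contradiction; this is exactly how the paper argues. Your remaining case (all three roots real with common modulus $\rho$) is dismissed by asserting that $A_s(z)=x$ has three distinct solutions ``on the relevant contours,'' which is unjustified — and, at contour endpoints, false — so as stated this is circular; the case is, however, easily closed by applying Lemma~\ref{lem:x_in_R} to the pair $+\rho,-\rho$, since two distinct real roots of equal modulus and equal image would have to be conjugate, hence equal. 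In summary, the second half is salvageable with small repairs, but the first half contains a genuine gap: the missing double-root case that the paper's two-case decomposition is specifically designed to handle.
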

\begin{proof}
To show $\Gamma_1(s)\cup\Gamma_2(s)\subset \mathbb{R}$, we consider
two cases, based on whether the equation $A_s(z)-x=0$ has a double
root or not. If $x\in \Gamma_1(s)\cup\Gamma_2(s)$ and $A_s(z)-x=0$
has a double root, then there exists $z_1\in \mathbb{C}$ such that
$A_s(z_1)=x$ and $A'_s(z_1)=0$. Since all zeros of $A'_s(z)$ are
real, it follows that $x\in \mathbb{R}$. On the other hand, suppose
$x\in \Gamma_1(s)\cup\Gamma_2(s)$ and $A_s(z)-x=0$ does not have a
double root, then there exist $z_1,z_2\in \mathbb{C}$ such that
$z_1\neq z_2$, $|z_1|=|z_2|$ and $x=A_s(z_1)=A_s(z_2)$. We then
conclude form Lemma~\ref{lem:x_in_R} that $x\in \mathbb{R}$. This
proves that $\Gamma_1(s)\cup\Gamma_2(s)\subset \mathbb{R}$.

To show $\Gamma_1(s)\cap\Gamma_2(s)=\emptyset$, we observe that, if
$x\in \Gamma_1(s)\cap\Gamma_2(s)$, there exist three solutions of
$A_s(z)=x$, one negative solution $z_1<0$ and two complex conjugated
solutions $z_2$ and $\bar z_2$ . Moreover $z_1\neq z_2$ and $z_1\neq
\bar z_2$. Now $|z_1|=|z_2|$ ($x\in\Gamma_1(s)$) and
 $A_s(z_1)=A_s(z_2)$. Again, by Lemma
\ref{lem:x_in_R}, we obtain $z_1=\bar z_2$, which is a
contradiction. Therefore, $\Gamma_1(s)\cap\Gamma_2(s)=\emptyset$.
\end{proof}

\paragraph{Proof of Proposition \ref{Proposition_Gamma_1_2_alpha}.}
Since $p$ and $q$ are positive, there are three local extrema of
$A_s(z)$, namely $\beta(s), \gamma(s), \eta(s)$, such that
$$\eta(s)<0<\beta(s)<\gamma(s).$$
If $x\in(\eta(s),\beta(s))\cup(\gamma(s),\infty)$, there exist three
different real solutions of $A_s(z)=x$. These solutions differ in
absolute value, which is obvious if $x\in(\eta(s),\beta(s))$ (cf.
Figure \ref{Plot_symbol}) and a consequence of Lemma
\ref{lem:x_in_R} if $x\in (\gamma(s),\infty)$. On the other hand,
there is one real and two complex conjugated solutions whenever
$x\in(-\infty,\eta(s)]\cup[\beta(s),\gamma(s)]$. Therefore
$$\Gamma_1(s)\cup\Gamma_2(s)\subset (-\infty,\eta(s)]\cup[\beta(s),\gamma(s)].$$
Note that $A_s(z)=\gamma(s)$ has a double root at $y_3=y_3(s)>0$ and
one negative root whose absolute value is less than $y_3(s)$. Thus
$\gamma(s) \in \Gamma_1(s)$. By the same argument, we see
$\beta(s)\in \Gamma_1(s)$. In view of the fact that $\Gamma_1(s)$ is
connected (see \cite{Ullman},\cite[Theorem 11.19]{BG}), it then
follows that $\Gamma_1(s)=[\beta(s),\gamma(s)]$. Similarly, we
notice that $A_s(z)=\eta(s)$ has a double root at $y_2(s)<0$ and a
negative root whose absolute value is larger than $|y_2(s)|$.
Therefore $\eta_2(s)\in \Gamma_2(s)$ and
$\Gamma_2(s)=(-\infty,\eta(s)]$.

To show that $\gamma(s)$ is an increasing function, we introduce
\begin{equation}\label{B(z,s)}
B(z,s)=A_s\Big(\frac{s(s+p)(s+p+q)}{z-s}\Big)=
\frac{z(z+p)(z+p+q)}{z-s}.
\end{equation}
Taking the partial derivative of $B(z,s)$ with respect to $s$, we
obtain
\begin{equation}\label{partial s of B}
\frac{\partial B(z,s)}{\partial s}=\frac{z(z+p)(z+p+q)}{(z-s)^2}.
\end{equation}
As a function of $z$, it is easily seen that $B(z,s)$ has a local
minimum at $s+s(s+p)(s+p+q)/y_3(s)$ and
$$\gamma(s)=A_s(y_3(s))=B(s+s(s+p)(s+p+q)/y_3(s),s).$$
This, together with \eqref{partial s of B}, implies that
\begin{align}\label{derivative of gamma s}
\gamma'(s)&=\frac{\partial B(s+s(s+p)(s+p+q)/y_3(s),s)}{\partial
z}\frac{\partial (s+s(s+p)(s+p+q)/y_3(s))} {\partial s} \nonumber \\
&~~~~~+\frac{\partial B(s+s(s+p)(s+p+q)/y_3(s),s)}{\partial
s}\nonumber \\
&=\frac{\partial B(s+s(s+p)(s+p+q)/y_3(s),s)}{\partial s} \nonumber \\
&=\frac{\hat z_3(s)(\hat z_3(s)+p)(\hat z_3(s)+p+q)}{(\hat
z_3(s)-s)^2},
\end{align}
where $\hat z_3(s)=s+s(s+p)(s+p+q)/y_3(s)$. As $y_3(s)>0$ and
$p,q>0$, we have $\hat z_3(s)>0$ for all $s>0$. Therefore
$\gamma(s)$ is increasing on $(0,\infty)$ by \eqref{derivative of
gamma s}. The monotonicity of $\beta(s)$ and $\eta(s)$ can be proved
in similar manners. Indeed, by the same argument, we have
\begin{align}
\eta'(s)&=\frac{\hat z_2(s)(\hat z_2(s)+p)(\hat z_2(s)+p+q)}{(\hat
z_2(s)-s)^2}, \label{derivative of eta s}\\
\beta'(s)&=\frac{\hat z_1(s)(\hat z_1(s)+p)(\hat z_1(s)+p+q)}{(\hat
z_1(s)-s)^2},
\end{align}
where $\hat z_j(s)=s+s(s+p)(s+p+q)/y_j(s)$, $j=1,2$. Note that
$y_2(s)$ and $y_1(s)$ are local extreme points of $A_s(z)$, whose
zeros are $-s(s+p)$, $-s(s+p+q)$ and $-(s+p)(s+p+q)$. Therefore, in
view of \eqref{y_i} (see also Figure \ref{Plot_symbol}), we have
\begin{equation*}
-s(s+p+q)<y_2(s)<-s(s+p) ~~\textrm{and}~~
-(s+p)(s+p+q)<y_1(s)<-s(s+p+q),
\end{equation*}
which implies
\begin{equation}\label{range of z1 z2}
-(p+q)<\hat z_2(s)<-p ~~\textrm{and}~~ -p<\hat z_1(s)<0.
\end{equation}
Combining \eqref{derivative of eta s}--\eqref{range of z1 z2}, it
follows that $\eta'(s)>0$ and $\beta'(s)<0$ for all $s>0$, which
gives the desired monotonicity of $\beta(s)$ and $\eta(s)$.

Finally, we come to the boundary values of $\gamma(s)$, $\beta(s)$
and $\eta(s)$. A straightforward calculation yields that $y_1(s)$,
$y_2(s)$ and $y_3(s)$ have the following behavior as $s\rightarrow
0$
\begin{align*}
y_1(s)&=-\sqrt{p(p+q)(2p+q)s}+\mathcal{O}(s),\\
y_2(s)&=-\frac{2p(p+q)}{2p+q}s+\mathcal{O}(s^2),\\
y_3(s)&=\sqrt{p(p+q)(2p+q)s}+\mathcal{O}(s).\\
\end{align*}
Hence,
\begin{align*}
\gamma(s)=A_s(y_3(s))&=p(p+q)+\mathcal{O}(s),\\
\eta(s)=A_s(y_2(s))&=-\frac14 q^2+\mathcal{O}(s),\\
\beta(s)=A_s(y_1(s))&=p(p+q)+\mathcal{O}(s),
\end{align*}
as $s\to 0+$. This proves the limits
$$\lim_{s\rightarrow 0+} \beta(s) = \lim_{s\rightarrow 0+} \gamma(s)=p(p+q)
~~ \textrm{and}~~ \lim_{s\rightarrow 0+} \eta(s)=-\frac14 q^2.$$

On the other hand, note that $y_1(s)$, $y_2(s)$ and $y_3(s)$ have
the following behavior as $s\rightarrow \infty$,
\begin{align*}
y_1(s)&=-s^2+c_1s+\mathcal{O}(1),\\
y_2(s)&=-s^2+c_2s+\mathcal{O}(1),\\
y_3(s)&=2s^2+\frac{8p-4q}{3}s+\mathcal{O}(1),\\
\end{align*}
where $c_1=(-4p-2q-\sqrt{p^2+pq+q^2})/3$ and
$c_2=(-4p-2q+\sqrt{p^2+pq+q^2})/3$, the limits of $\gamma(s)$,
$\beta(s)$ and $\eta(s)$ as $s\to \infty$ can be obtained in a
manner similar to the situation $s\to 0+$, we omit the details and
this completes the proof of the Proposition
\ref{Proposition_Gamma_1_2_alpha}. \qed

\section{Proof of Theorem~\ref{proposition_V(x)_nu_1}}
\label{sec:proof_theorem_Var_conditions}

From the definitions of $\nu_1^{\xi}$ and $\nu_2^{\xi}$ in
\eqref{nu1xi-as-integral} and \eqref{nu2xi-as-integral}, it is clear
that $\supp(\nu_1^{\xi}) \subset [0,\infty)$, $\int d\nu_1^{\xi} =
1$ and $\supp(\nu_2^{\xi}) \subset (-\infty,0]$, $\int d\nu_2^{\xi}
= 1/2$. Indeed, on account of the fact that the sets $\Gamma_1(s) =
\supp(\mu_1^s)$ and $\Gamma_2(s) = \supp(\mu_2^s)$ are increasing as
$s$ increases (see Proposition \ref{Proposition_Gamma_1_2_alpha}),
it follows that
\begin{equation} \label{eq:supportnu1}
    \supp(\nu_1^\xi) = \bigcup_{s \leq \xi} \Gamma_1(s) = \Gamma_1(\xi)
\end{equation}
and
\begin{equation}\label{eq:supportnu2}
\supp(\nu_2^\xi) = \bigcup_{s \leq \xi} \Gamma_2(s) = \Gamma_2(\xi).
\end{equation}

Thus, in order to show that $(\nu_1^{\xi}, \nu_2^{\xi})$ is the
minimizer of the energy functional \eqref{functionalwithV} under the
conditions stated in Theorem~\ref{proposition_V(x)_nu_1}, it
suffices to prove that the vector $(\nu_1^{\xi}, \nu_2^{\xi})$
satisfies the variational conditions \eqref{variational_nu_1} and
\eqref{variational_nu_2}.

The basic idea is, as mentioned in Subsection \ref{subsec:statement of results},
to integrate the variational conditions
\eqref{variational_mu_1} and \eqref{variational_mu_2} with respect
to $s$ from $0$ to $\xi$. Here, we need a more general expression
for the variational conditions \eqref{variational_mu_1} and
\eqref{variational_mu_2}, namely
\begin{align}
\label{variational_mu_1_log}
2\int \log |x-y| d\mu^s_1(y) -  \int \log |x-y| d\mu^s_2(y) - \ell^s & = \log \left|
\frac{z_1(x,s)}{z_2(x,s)} \right|,\\
\label{variational_mu_2_log} 2\int \log |x-y| d\mu^s_2(y) -  \int
\log |x-y| d\mu^s_1(y) &= \log \left| \frac{z_2(x,s)}{z_3(x,s)}
\right|,
\end{align}
for all $x\in\mathbb{C}$, which are contained in the proof of Theorem 2.3 of \cite{DK1}.
These conditions reduce to \eqref{variational_mu_1} and \eqref{variational_mu_2} whenever
$x\in \Gamma_1(s)$ and $x\in \Gamma_2(s)$, respectively.

\paragraph{Proofs of \eqref{variational_nu_1} and \eqref{variational_nu_2}.}
Multiplying both sides of \eqref{variational_mu_1_log} by $1/\xi$
and integrating with respect to $s$ from $0$ to $\xi$, we obtain
from interchanging the order of integration that
\begin{equation} \label{eq:integrated1}
    2\int \log |x-y| d\nu^\xi_1(y) -  \int \log |x-y| d\nu^\xi_2(y) -
    \ell =\frac1\xi \int_0^\xi \log \left| \frac{z_1(x,s)}{z_2(x,s)}
    \right| ds,
    \end{equation}
for all $x\in\mathbb{C}$ and some constant $\ell\in \mathbb{R}$. The
measures $\nu_1^\xi$ and $\nu_2^\xi$ in \eqref{eq:integrated1} are
defined in \eqref{nu1xi-as-integral} and \eqref{nu2xi-as-integral},
respectively.

Let $x \geq 0$. Since $|z_1(x,s)| \geq |z_2(x,s)|$ for every $s$, it
follows from \eqref{eq:integrated1} and \eqref{definition-of-V} that
\begin{equation} \label{eq:integrated2}
    2\int \log |x-y| d\nu^\xi_1(y) -  \int \log |x-y| d\nu^\xi_2(y) -
    \ell \leq \frac{1}{\xi} \int_0^{\infty} \log \left| \frac{z_1(x,s)}{z_2(x,s)}
    \right| ds = \frac{1}{\xi} V_1(x).
    \end{equation}
Suppose now $x \in \supp(\nu_1^{\xi})$, then $x \in \Gamma_1(\xi)$
by \eqref{eq:supportnu1}, and therefore $x \in \Gamma_1(s)$ for
every $s \geq \xi$, since the sets are increasing. Thus $|z_1(x,s)|
= |z_2(x,s)|$ for every $s \geq \xi$, and equality holds in
\eqref{eq:integrated2} for $x \in \supp(\nu_1^{\xi})$. This
completes the proof of \eqref{variational_nu_1}.

The variational condition \eqref{variational_nu_2} for $\nu_2^{\xi}$
can be proved in a manner similar to \eqref{variational_nu_1} by
using \eqref{variational_mu_2_log} and \eqref{eq:supportnu2}, we
omit the details. \qed

\section{Proof of Theorem~\ref{theorem:external_field}}
\label{sec:externalfield}

We conclude this paper with the proof of
Theorem~\ref{theorem:external_field}. We start with the following
lemma that embodies the behavior of three solutions of $A_s(z)$ as
$s\to 0+$.
\begin{lem}
\label{asymptotic_solutions_alpha} Let $A_s(z)$ be given by
\eqref{eq:factorization_As}, and let $z_1(x,s)$, $z_2(x,s)$ and
$z_3(x,s)$ be the solutions of $A_s(z)=x$, ordered as in
\eqref{absolute_value_solutions}. Then
\begin{equation} \label{eq:zforsto0}
\begin{aligned}
\lim_{s\to 0+} z_1(x,s) & = x-p(p+q),\\
\lim_{s\to 0+} s^{-1}z_2(x,s) & =
-\frac{p(p+q)(2p+q+\sqrt{q^2+4x})}{2(p^2+pq-x)},\\
\lim_{s\to 0+} s^{-1}z_3(x,s) & =
-\frac{p(p+q)(2p+q-\sqrt{q^2+4x})}{2(p^2+pq-x)}.
\end{aligned}
\end{equation}
\end{lem}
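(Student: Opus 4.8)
The plan is to clear denominators in \eqref{eq:factorization_As} and analyse the resulting cubic. Write
\[
P(z,s) := \bigl(z+s(s+p)\bigr)\bigl(z+s(s+p+q)\bigr)\bigl(z+(s+p)(s+p+q)\bigr) - x z^2 ,
\]
so that $A_s(z) = x$ is equivalent to $P(z,s) = 0$. At $s = 0$ the first two factors reduce to $z$, giving $P(z,0) = z^2\bigl(z + p(p+q) - x\bigr)$: a simple root at $z = x - p(p+q)$ and a double root at $z = 0$. Since the three roots of $P(\cdot,s)$ depend continuously on $s$, exactly one of them converges to $x - p(p+q)$ while the other two converge to $0$; for small $s > 0$ the first dominates the others in modulus, hence it is $z_1(x,s)$, and $\lim_{s\to 0+} z_1(x,s) = x - p(p+q)$. (When $x \neq p(p+q)$ one has $\partial_z P\bigl(x-p(p+q),0\bigr) = \bigl(x-p(p+q)\bigr)^2 \neq 0$, so the implicit function theorem even produces an analytic branch.)

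For the two small roots I would substitute $z = sw$. Each of the first two factors of $P$ then contributes one power of $s$ and the term $-xz^2$ carries $s^2$, so $P(sw,s) = s^2 Q(w,s)$ with
\[
Q(w,s) := (w+s+p)(w+s+p+q)\bigl(sw + (s+p)(s+p+q)\bigr) - x w^2 .
\]
Letting $s \to 0+$ gives the limiting quadratic
\[
Q(w,0) = p(p+q)(w+p)(w+p+q) - x w^2 = \bigl(p(p+q)-x\bigr)w^2 + p(p+q)(2p+q)\,w + p^2(p+q)^2 ,
\]
whose discriminant simplifies to $p^2(p+q)^2\bigl[(2p+q)^2 - 4\bigl(p(p+q)-x\bigr)\bigr] = p^2(p+q)^2(q^2+4x)$. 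Hence the roots of $Q(\cdot,0)$ are
\[
w_\pm = -\,\frac{p(p+q)\bigl(2p+q \mp \sqrt{q^2+4x}\bigr)}{2\bigl(p^2+pq-x\bigr)} ,
\]
with the branch of the square root positive for $x > -q^2/4$. By continuity of the roots of $Q(\cdot,s)$, the quantities $s^{-1}z_2(x,s)$ and $s^{-1}z_3(x,s)$ converge (in some order) to $w_-$ and $w_+$, which are precisely the two expressions in \eqref{eq:zforsto0}.

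It remains to check that the ordering \eqref{absolute_value_solutions} matches $z_2 \leftrightarrow w_-$ and $z_3 \leftrightarrow w_+$. For real $x > -q^2/4$ one verifies directly that $2p+q+\sqrt{q^2+4x} \geq \bigl|\,2p+q - \sqrt{q^2+4x}\,\bigr|$, so $|w_-| \geq |w_+|$ irrespective of the sign of $p(p+q)-x$; this fixes the correspondence, and it propagates to the remaining $x$ by continuity (for $x < -q^2/4$ the two small roots form a complex-conjugate pair with $|w_-| = |w_+|$, and the branch of $\sqrt{q^2+4x}$ is chosen accordingly). The points needing a word of care are the exceptional values $x = p(p+q)$, where the leading coefficient of $Q(\cdot,0)$ vanishes so that one small root is no longer of exact order $s$, and $q^2+4x = 0$, where $w_+ = w_-$; away from these the argument above is routine, and I expect the only genuinely delicate step to be the careful justification that the small roots, after rescaling by $s$, converge to the roots of the limiting quadratic.
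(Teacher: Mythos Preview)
Your proposal is correct and is exactly the ``straightforward computation'' the paper alludes to: the paper's own proof consists of the single sentence ``The lemma follows by a straightforward computation.'' You have supplied the details the authors omitted, including the rescaling $z=sw$ and the identification of the ordering, so there is nothing to compare.
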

\begin{proof}
The lemma follows by a straightforward computation.
\end{proof}

To prove Theorem~\ref{theorem:external_field}, we need to establish
the identities
\eqref{external_field}--\eqref{eq:density_nu2_formula}.

\paragraph{Proof of \eqref{external_field}.}
Let $x > 0$. From Proposition \ref{Proposition_Gamma_1_2_alpha}, it
follows that there exists a unique $s^*(x) \geq 0$ so that for all
$s > 0$,
\begin{equation}\label{def of s star 1}
x \in \Gamma_1(s) \quad \Longleftrightarrow \quad s \geq s^*(x).
\end{equation}
Then $\log |z_1(x,s)/z_2(x,s)|=0$ for all $s\geq s^*(x)$, and so by
\eqref{definition-of-V}
\begin{equation} \label{eq:finiteintegralV1}
V_1(x)=\int_0^{s^*(x)}  \log \left| \frac{z_1(x,s)}{z_2(x,s)} \right| ds.
\end{equation}
There is a special value
\[ x_0 = p(p+q) = \lim_{s \to 0+} \beta(s) = \lim_{s \to 0+} \gamma(s) \]
that belongs to every $\Gamma_1(s)$ for any $s > 0$. Then $s^*(x_0)
= 0$ and
\begin{equation} \label{eq:Vinx0}
V_1(x_0) = 0.
\end{equation}

The derivative of \eqref{eq:finiteintegralV1} is
\begin{equation} \label{integral_derivative_A}
V_1'(x) = \int_0^{s^*(x)} \left(\frac{1}{z_1(x,s)}\frac{\partial
z_1(x,s)}{\partial x} -\frac{1}{z_2(x,s)}\frac{\partial
z_2(x,s)}{\partial x}\right) ds.
\end{equation}
In order to evaluate this integral, we introduce new variables
\begin{equation}
\label{eq:variables_z_tilde}
    \widetilde{z}_j(x,s) = \frac{s(s+p)(s+p+q)}{z_j(x,s)}+s,\qquad j=1,2,3.
\end{equation}
Since each $z_j(x,s)$ is a solution of $A_s(z)=x$, it follows that
$\widetilde{z}_j(x,s)$ for $j=1,2,3$ is a solution of the equation
$B(z,s) = x$, where
\begin{equation} \label{Symbol_B}
    B(z,s) = \frac{z(z+p)(z+p+q)}{z-s};
\end{equation}
see \eqref{B(z,s)}.

Taking partial derivatives with respect to $s$ and $x$ on both sides
of $B(\widetilde{z}_j(x,s),s)=x$, and applying the chain rule, we
obtain
\begin{equation} \label{eq:Bpartials}
\begin{aligned}
    \left(\frac{\partial B}{\partial z}(\widetilde{z}_j(x,s),s) \right) \,
    \frac{\partial \widetilde{z}_j(x,s)}{\partial s} +
    \frac{\partial B}{\partial s}(\widetilde{z}_j(x,s),s) & = 0, \\
    \left(\frac{\partial B}{\partial z}(\widetilde{z}_j(x,s),s)\right)
    \frac{\partial \widetilde{z}_j(x,s)}{\partial x} & = 1
\end{aligned}
\end{equation}
for $j=1,2,3$. From \eqref{Symbol_B}, it is elementary to deduce that
\[ \frac{\partial B}{\partial s}(\widetilde{z}_j(x,s),s)
=\frac{\widetilde z_j(x,s)(\widetilde z_j(x,s)+p)(\widetilde
z_j(x,s)+p+q)} {(\widetilde z_j(x,s)-s)^2}=\frac{x}{\widetilde
z_j(x,s)-s}. \] Combining this with \eqref{eq:Bpartials}, we obtain
\begin{equation}\label{eq:derivative_z_i first}
\frac{x}{\widetilde{z}_j(x,s)-s}\frac{\partial \widetilde{z}_j(x,s)}{\partial x}
=-\frac{\partial \widetilde{z}_j(x,s)} {\partial s}, \qquad j=1,2,3.
\end{equation}
By \eqref{eq:variables_z_tilde}, we also note that
\begin{equation}\label{partial x of zj}
\frac{\partial \widetilde z_j(x,s)}{\partial x}
=-\frac{s(s+p)(s+p+q)}{z_j^2(x,s)}\frac{\partial z_j(x,s)}{\partial
x} =-\frac{\widetilde z_j(x,s)-s}{ z_j(x,s)} \frac{\partial
z_j(x,s)}{\partial s}.
\end{equation}
Substituting \eqref{partial x of zj} into \eqref{eq:derivative_z_i
first} gives
\begin{equation}\label{eq:derivative_z_i}
\frac{1}{z_j(x,s)}\frac{\partial z_j(x,s)}{\partial s}
=\frac{1}{x}\frac{\partial \widetilde{z}_j(x,s)} {\partial s},
\qquad j=1,2,3.
\end{equation}

Hence, using \eqref{eq:derivative_z_i} in \eqref{integral_derivative_A} we get
\begin{equation*}
    V_1'(x)=\frac{1}{x}\int_0^{s^*(x)} \left(
    \frac{\partial \widetilde{z}_1(x,s)}{\partial s}-
    \frac{\partial \widetilde{z}_2(x,s)}{\partial s}\right) ds.
\end{equation*}
It then follows from the fundamental theorem of calculus that
\begin{equation} \label{eq:Vprime3}
V_1'(x) = \frac{1}{x}[\widetilde z_1(x,s^*(x))-\widetilde z_2(x,s^*(x))-\lim_{s\to 0+}
(\widetilde z_1(x,s)-\widetilde z_2(x,s))].
\end{equation}

By definition, $s^*(x)$ is the smallest value of $s \geq 0$ for
which $x\in \Gamma_1(s)$. Then $x=\gamma(s^*(x))$ if $x_0<x$ and
$x=\beta(s^*(x))$ if $0<x<x_0$. We can observe from Figure
\ref{Plot_symbol} that $z_1(\gamma(s),s)=z_2(\gamma(s),s)$ and
$z_1(\beta(s),s)=z_2(\beta(s),s)$. Therefore,
 \[ \widetilde z_1(x,s^*(x)) =\widetilde z_2(x,s^*(x)) \]
 and \eqref{eq:Vprime3} reduces to
\begin{equation} \label{eq:Vprime4}
V_1'(x) = -\frac{1}{x}\lim_{s\to 0+}
(\widetilde z_1(x,s)-\widetilde z_2(x,s)).
\end{equation}
From \eqref{eq:variables_z_tilde} and
Lemma~\ref{asymptotic_solutions_alpha}, we find that
\begin{align*}
    \lim_{s\rightarrow 0+} \widetilde z_1(x,s) = 0
    \qquad \text{and} \qquad
    \lim_{s\rightarrow 0+} \widetilde z_2(x,s) =
    -\frac{2(p^2+pq-x)}{2p+q+\sqrt{q^2+4x}}.
\end{align*}
Then \eqref{eq:Vprime4} leads to
\begin{equation} \label{eq:Vprime5}
V_1'(x)=-\frac{1}{x}\frac{2(p^2+pq-x)}{2p+q+\sqrt{q^2+4x}}.
\end{equation}

We obtain $V_1(x)$ by integrating \eqref{eq:Vprime5} with respect to $x$, which
gives
\[ V_1(x)=\sqrt{q^2+4x}-p\log(4x)-q\log(\sqrt{q^2+4x}+q)+C. \]
The constant of integration $C$ can be determined by requiring
$V_1(x_0) = 0$; see \eqref{eq:Vinx0}. This gives
\[ C=-2p-q+p\log(4p^2+4pq)+q\log(2p+2q), \]
and \eqref{external_field} is proved. \qed

\paragraph{Proof of \eqref{external_field V2}.}
Let $x < 0$. Again, it follows from Proposition
\ref{Proposition_Gamma_1_2_alpha} that there exists a unique $s^*(x)
\geq 0$ so that for all $s > 0$,
\begin{equation}\label{def of s star 2}
 x \in \Gamma_2(s)
\quad \Longleftrightarrow \quad s \geq s^*(x).
\end{equation} Then
$\log |z_2(x,s)/z_3(x,s)|=0$ for all $s\geq s^*(x)$, and so by
\eqref{definition-of-V}
\begin{equation} \label{eq:finiteintegralV}
V_2(x)=\int_0^{s^*(x)}  \log \left| \frac{z_2(x,s)}{z_3(x,s)}
\right| ds.
\end{equation}
There also exists a special value
\[ \widetilde x_0 = -q^2/4 = \lim_{s \to 0+} \eta(s) \]
so that $x\in\Gamma_2(s)$ for any $s > 0$ if $x \leq \widetilde
x_0$. Then, for any $x \leq \widetilde x_0$, we have
$s^*(x) = 0$
and
\begin{equation} \label{eq:V2inx0}
V_2(x) = 0.
\end{equation}

If $-q^2/4=\widetilde x_0 \leq x<0$, by the same argument in the
proof of \eqref{external_field}, we have
\begin{equation} \label{eq:V2prime}
V_2'(x) = -\frac{1}{x}\lim_{s\to 0+} (\widetilde z_2(x,s)-\widetilde
z_3(x,s)),
\end{equation}
where $\widetilde z_j(x,s)$, $j=2,3$ are given in
\eqref{eq:variables_z_tilde}. From
Lemma~\ref{asymptotic_solutions_alpha}, we see
\begin{align*}
    \lim_{s\rightarrow 0+} \widetilde z_2(x,s) =
    -\frac{2(p^2+pq-x)}{2p+q+\sqrt{q^2+4x}}
    \qquad \text{and} \qquad
    \lim_{s\rightarrow 0+} \widetilde z_3(x,s) =
    -\frac{2(p^2+pq-x)}{2p+q-\sqrt{q^2+4x}}.
\end{align*}
Hence, \eqref{eq:V2prime} leads to
\begin{equation} \label{eq:V2prime2}
V_2'(x)=-\frac{\sqrt{q^2+4x}}{x}.
\end{equation}

By integrating \eqref{eq:V2prime2} with respect to $x$, we obtain
\[ V_2(x)=-2\sqrt{q^2+4x}+
q\log\Big(\frac{q+\sqrt{q^2+4x}}{q-\sqrt{q^2+4x}}\Big)+ \widetilde
C,\] for $-q^2/4\leq x<0$. The constant of integration $\widetilde
C$ can be determined by requiring $V_2(\widetilde x_0) = 0$; see
\eqref{eq:V2inx0}. This leads to $\widetilde C=0$ and
\eqref{external_field V2} is proved. \qed

\paragraph{Proofs of \eqref{eq:density_nu1_formula}
and \eqref{eq:density_nu2_formula}.}

By the definitions of $\nu_1^\xi$ and $\nu_2^\xi$ in
 \eqref{nu1xi-as-integral} and
\eqref{nu2xi-as-integral}, we obtain from
\eqref{measure_mu_for_recurrence} and \eqref{eq:densitymu2} that
\begin{align*}
\frac{d\nu_1^\xi}{dx}(x)
=\frac1{2\pi i \xi} \int_{0}^{\xi} \left(
\frac{1}{z_{1-}(x,s)}\frac{\partial z_{1-}(x,s)}{\partial x}-
\frac{1}{z_{1+}(x,s)}\frac{\partial z_{1+}(x,s)}{\partial x} \right)ds, \\
\frac{d\nu_2^\xi}{dx}(x)
= \frac1{2\pi i \xi} \int_{0}^{\xi}\left(
\frac{1}{z_{2+}(x,s)}\frac{\partial z_{2+}(x,s)}{\partial
x}-\frac{1}{z_{2-}(x,s)}\frac{\partial z_{2-}(x,s)}{\partial x}
\right)ds.
\end{align*}
With $s^*(x)$ given in \eqref{def of s star 1} and \eqref{def of s
star 2}, it is readily seen that
\begin{align*}
\frac{d\nu_1^\xi}{dx}(x) =\frac1{2\pi i \xi} \int_{s^*(x)}^{\xi}
\left( \frac{1}{z_{1-}(x,s)}\frac{\partial z_{1-}(x,s)}{\partial x}-
\frac{1}{z_{1+}(x,s)}\frac{\partial z_{1+}(x,s)}{\partial x} \right)ds, \\
\frac{d\nu_2^\xi}{dx}(x) = \frac1{2\pi i \xi}
\int_{s^*(x)}^{\xi}\left( \frac{1}{z_{2+}(x,s)}\frac{\partial
z_{2+}(x,s)}{\partial x}-\frac{1}{z_{2-}(x,s)}\frac{\partial
z_{2-}(x,s)}{\partial x} \right)ds.
\end{align*}
Now, by introducing the change of variable
\eqref{eq:variables_z_tilde}, one can evaluate the above integrals
using similar methods as given in the proofs of
\eqref{external_field} and \eqref{external_field V2}. We omit the
details and this completes the proof of Theorem
\ref{theorem:external_field}. \qed

\section*{Acknowledgements}
\noindent The authors are supported by the Belgian Interuniversity
Attraction Pole P06/02. The first author is supported by
FWO-Flanders project G.0427.09. The second author is supported by K.
U. Leuven research grant OT/08/33.


\end{document}